\documentclass{amsart}

\usepackage{amsmath,amsthm,amssymb}
\usepackage{tikz,graphicx}
\usepackage{standalone}
\usepackage{caption}
\usepackage{enumitem}
\usepackage{hyperref}

\usetikzlibrary{intersections,calc}

\newcommand{\makepoints}{ 
	\foreach \n in {1,...,4} { \coordinate (\n) at (\n*90-135:0.5cm); }; }
\newcommand{\drawpoints}{ 
	\foreach \n in {1,...,4} { \draw (\n) node [littledot] {}; }; }

\definecolor{blue-violet}{rgb}{0.54, 0.17, 0.89}

\tikzstyle{PurpleLine}=[line width=0.3mm,color=blue-violet,text=black]
\tikzstyle{PurplePoly}=[PurpleLine,fill=blue-violet!30]
\tikzstyle{BlueLine}=[line width=0.3mm,color=blue,text=black]
\tikzstyle{BluePoly}=[BlueLine,fill=blue!20]
\tikzstyle{RedLine}=[line width=0.3mm,color=red,text=black]
\tikzstyle{RedPoly}=[RedLine,fill=red!20]
\tikzstyle{GreenLine}=[thick,draw=black!30!green,text=black]
\tikzstyle{GreenPoly}=[thick,draw=green!50!black,fill=green!30,join=bevel]
\tikzstyle{OrangeLine}=[thick,color=orange]
\tikzstyle{GrayLine}=[thick,color=black!50!gray]
\tikzstyle{GrayPoly}=[GrayLine,fill=gray!20]
\tikzstyle{dot}=[shape=circle,draw,color=black,fill=black,inner sep=1.5pt]
\tikzstyle{bigdot}=[dot,inner sep=2pt]
\tikzstyle{littledot}=[dot,inner sep=1.2pt]
\tikzstyle{tinydot}=[dot,inner sep=0.6pt]
\tikzstyle{disk}=[thick,shape=circle,draw,color=black,fill=yellow!10]
\tikzstyle{plate}=[thick,shape=rectangle,draw,color=black,fill=yellow!10,
	rounded corners,minimum size=1.1cm]

\tikzstyle{dot}=[shape=circle,draw,color=black,fill=black,inner sep=1.5pt]
\tikzstyle{opendot}=[dot,fill=white]

\newcommand{\makelinepoints}{
    \foreach \n in {1,2,3,4} {
        \coordinate (\n) at (-1.25+\n*0.5,0);
    }
}
\newcommand{\drawlinepoints}{
    \foreach \n in {1,2,3,4} {
        \draw (\n) node [littledot] {};
    }
}

\newcommand{\maketpoints}{
	\coordinate (0) at (0,0.3);
	\foreach \n in {1,2,3,4} {
        \coordinate (\n) at (-0.5+\n*0.2,-0.2);
    }
}
\newcommand{\drawtpoints}{
    \foreach \n in {0,1,2,3,4} {
        \draw (\n) node [tinydot] {};
    }
}

\tikzstyle{YellowRect} = [shape=rectangle,rounded corners,draw,fill=yellow!40,minimum width = 2cm, minimum height=1cm]

\theoremstyle{plain}
\newtheorem{thm}{Theorem}[section]
\newtheorem{lem}[thm]{Lemma}

\newtheorem{prop}[thm]{Proposition}

\newtheorem{question}[thm]{Question}

\newtheorem{mainthm}{Theorem}

\theoremstyle{definition}
\newtheorem{defn}[thm]{Definition}
\newtheorem{rem}[thm]{Remark}

\newtheorem{example}[thm]{Example}

\newcommand{\C}{\mathbb{C}}

\newcommand{\NC}{\textsc{NC}}

\newcommand{\conv}{\textsc{Conv}}
\newcommand{\bool}{\textsc{Bool}}

\begin{document}

\title{Noncrossing Partitions from Cones and Semicircles}

\author[Dougherty]{Michael Dougherty}
\email{doughemj@lafayette.edu}
\address{Dept. of Mathematical Sciences, Lafayette College,
  Easton, PA 18042}
\author[Fang]{Kaiyi Fang}
\email{fangk@lafayette.edu}
\address{Dept. of Mathematical Sciences, Lafayette College,
  Easton, PA 18042}
\author[Jiang]{Yunting Jiang}
\email{jiangy@lafayette.edu}
\address{Dept. of Mathematical Sciences, Lafayette College,
  Easton, PA 18042}
\author[Lin]{Edgar Lin}
\email{linb@lafayette.edu}
\address{Dept. of Mathematical Sciences, Lafayette College,
  Easton, PA 18042}
\author[Lindenmuth]{Lucas Lindenmuth}
\email{lindenml@lafayette.edu}
\address{Dept. of Mathematical Sciences, Lafayette College,
  Easton, PA 18042}
\author[Pokras]{Eleanor Pokras}
\email{pokrasm@lafayette.edu}
\address{Dept. of Mathematical Sciences, Lafayette College,
  Easton, PA 18042}
\author[Root]{Gina Root}
\email{roota@lafayette.edu}
\address{Dept. of Mathematical Sciences, Lafayette College,
  Easton, PA 18042}

\date{\today}

\begin{abstract}
    For each finite configuration of distinct points in the 
    plane, there is an associated lattice of noncrossing partitions.
    When these points form the vertices of a convex polygon, the
    result is the classical noncrossing partition lattice, 
    which is enumerated by the Catalan numbers and satisfies many
    other useful properties. In this article, we examine three 
    variations of this lattice which arise when the starting 
    configuration is allowed to have points on the sides
    of a convex polygon rather than just the vertex set.
\end{abstract}

\maketitle

\section{Introduction}

Let $P$ be a finite set of points in the plane. A partition of $P$ into blocks 
is \emph{noncrossing} if the convex hulls of the blocks are pairwise disjoint,
and the set of all such partitions---denoted $\NC(P)$---is partially ordered by refinement,
forming a subposet of the full lattice of partitions of $P$. When $P$ is the
vertex set of a convex $n$-gon, the poset $\NC(P)$ is the classical 
\emph{noncrossing partition lattice} $\NC(n)$, which was introduced by Kreweras
\cite{kreweras72} and has since taken on an important role in combinatorics,
geometric group theory, and other fields. See \cite{baumeister19} and \cite{mccammond06}
for surveys. 
At another extreme, if $P$ is a configuration of $n$ points which are all collinear,
then $\NC(P)$ is isomorphic to the Boolean lattice of height $n-1$, i.e. the poset of
all subsets of $\{1,\ldots,n-1\}$ under inclusion. 

The Boolean lattice $\bool(n)$ and the classical lattice of noncrossing partitions 
$\NC(n)$ satisfy a number of interesting poset properties. To name only a few, both
of these lattices are bounded and graded, both are self-dual and admit 
symmetric chain decompositions \cite{de-bruijn,simion-ullman91}, and they are enumerated by 
interesting integer sequences (powers of 2 and the \emph{Catalan numbers}, respectively).
Both lattices are also of interest in the study of Artin groups due to their relationship 
to Garside structures for free abelian groups and braid groups, respectively
\cite{bessis03,brady-watt02}.

The appearance of these two important 
posets from the same construction suggests studying a natural generalization which
incorporates both examples.

\begin{question}\label{ques:nc-convex-boundary}
	If $P$ is a finite set of points which lie in the boundary of a convex
	polygon (rather than merely its vertices), then what can be said about $\NC(P)$?
\end{question}

It is straightforward to see that $\NC(P)$ is a bounded lattice for any choice of
configuration $P$ \cite[Proposition~2.3]{cdhm24}, but other properties vary. 
For example, there are large classes of configurations $P$ for which $\NC(P)$ 
is also counted by the Catalan numbers (without 
being isomorphic to the classical lattice of noncrossing partitions), yet there are
other configurations for which the corresponding lattice of noncrossing partitions
is not rank-symmetric (or even graded)---see Figure~\ref{fig:ungraded-configurations}. 
Our first theorem demonstrates that the configurations considered in 
Question~\ref{ques:nc-convex-boundary} always produce graded lattices.

\begin{figure}
	\centering
	\begin{tikzpicture}
	    \begin{scope}[shift={(-4,0)}]
	        \node [shape=rectangle,draw,fill=green!20, inner sep = 1.8cm] () at (0,0) {};
	    
			\node[dot] () at (90:1.4cm) {};
			\node[dot] () at (210:1.4cm) {};
			\node[dot] () at (330:1.4cm) {};
			\node[dot] () at (150:1.4cm) {};
			\node[dot] () at (270:1.4cm) {};
			\node[dot] () at (30:1.4cm) {};
	
	    \end{scope}
	
	    \begin{scope}[shift={(0,0)}]
	        \node [shape=rectangle,draw,fill=green!20, inner sep = 1.8cm] () at (0,0) {};
	    
			\node[dot] () at (90:1.4cm) {};
			\node[dot] () at (210:1.4cm) {};
			\node[dot] () at (330:1.4cm) {};
			\node[dot] () at (150:0.7cm) {};
			\node[dot] () at (270:0.7cm) {};
			\node[dot] () at (30:0.7cm) {};
	        
	    \end{scope}
	
	    \begin{scope}[shift={(4,0)}]
	        \node [shape=rectangle,draw,fill=green!20, inner sep = 1.8cm] () at (0,0) {};
	    
			\node[dot] () at (90:1.4cm) {};
			\node[dot] () at (210:1.4cm) {};
			\node[dot] () at (330:1.4cm) {};
			\node[dot] () at (150:0.3cm) {};
			\node[dot] () at (270:0.3cm) {};
			\node[dot] () at (30:0.3cm) {};
	        
	    \end{scope}
	\end{tikzpicture}
	\caption{The lattice of noncrossing partitions for the leftmost
	configuration is both graded and rank-symmetric, whereas the lattice
	of noncrossing partitions for the middle configuration is graded but not
	rank-symmetric, and the lattice of noncrossing partitions for the rightmost
	configuration is not even graded.}
	\label{fig:ungraded-configurations}
\end{figure}
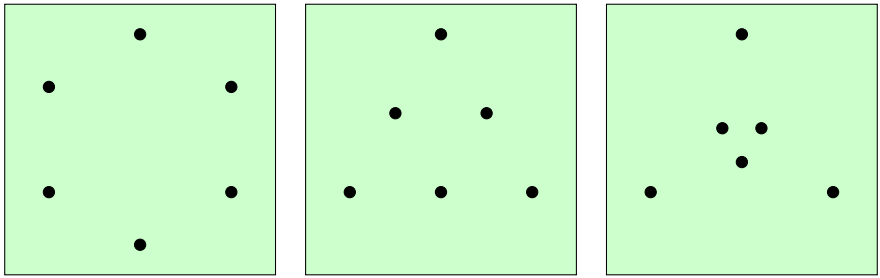

\begin{mainthm}[Theorem~\ref{thm:graded}]\label{mainthm:graded}
	If $P$ is a finite subset of $\mathbb{C}$ which lies on the boundary of a convex polygon,
	then $\NC(P)$ is graded.
\end{mainthm}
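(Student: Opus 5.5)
We will show that every maximal chain in $\NC(P)$ has length $|P|-1$. Since $\NC(P)$ is bounded, with the discrete partition $\hat 0$ and the one-block partition $\hat 1$, this says $\NC(P)$ is graded with rank function $\rho(\pi) = |P| - \#\text{blocks}(\pi)$. It suffices to prove the following: if $\sigma < \pi$ is a cover relation in $\NC(P)$, then $\pi$ is obtained from $\sigma$ by merging exactly two blocks. Equivalently, whenever $\sigma < \pi$ and $\pi$ has at least two fewer blocks than $\sigma$, there is a noncrossing $\tau$ with $\sigma < \tau < \pi$. The middle example of Figure~\ref{fig:ungraded-configurations} shows how this fails in general: an interior point can sit inside the triangle spanned by three outer points. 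So the argument must use that every point of $P$ lies on the boundary of the convex polygon $K$.

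First reduce to a single block. Fix a block $B$ of $\pi$ that is the union of $k \geq 2$ blocks $B_1,\dots,B_k$ of $\sigma$. Any partition obtained from $\sigma$ by merging some of the $B_i$ with one another stays below $\pi$. It is also noncrossing provided the merged hull lies inside $\conv(B)$ and avoids the hulls of the other $B_j$: the other blocks of $\pi$ have hulls disjoint from $\conv(B)$. So we need to find two blocks $B_i, B_j$ with $\conv(B_i \cup B_j)$ disjoint from $\conv(B_l)$ for all $l \neq i,j$. If $k \geq 3$ this gives a strictly intermediate $\tau$. If $k=2$, merging the only two blocks gives $\pi$ itself, so in that case merging any two blocks is a cover.

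The key geometric step uses the boundary structure. Parametrize $\partial K$ cyclically, so that $P$ inherits a cyclic order; collinear points on one side are ordered along the side. We claim that in the cyclic order of the points of $B$, some two blocks $B_i, B_j$ are \emph{adjacent}: some point of $B_i$ is followed, in the cyclic order restricted to $B$, by a point of $B_j$. This is automatic when $k \geq 2$. We then want to choose the pair so that the union $B_i \cup B_j$ is "cyclically interval-like" relative to the other blocks. The noncrossing condition for points on $\partial K$ should be close to the classical cyclic noncrossing condition, except that points on a common side are collinear, so segments along a side can meet. The plan is to take an innermost pair. Because $\sigma$ is noncrossing, the blocks $B_i$ nest like arcs in the cyclic order. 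Choose $B_i$ whose points occupy a minimal cyclic interval-region, meaning no other $B_l$ lies between two consecutive points of $B_i$ on that side. Then merge $B_i$ with the block $B_j$ next to it in the cyclic order.

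The main obstacle is verifying convex-hull disjointness in the degenerate collinear cases. When several points lie on one side of $K$, a block's hull can be a segment lying in $\partial K$, and "betweenness" must be read along that side. We would handle this by proving a lemma: for $Q \subseteq \partial K$, two subsets have disjoint hulls iff they are not interleaved in the cyclic order, and in addition, when all points of both sets lie on one common side, their segments do not overlap. Given this lemma, showing that $\conv(B_i\cup B_j)$ misses every other $\conv(B_l)$ reduces to a combinatorial check on cyclic words. There the innermost choice guarantees that no $B_l$ interleaves with $B_i\cup B_j$. The collinear subtlety is that a lone block sitting strictly between them on a single side would intersect the merged segment, and the innermost choice rules this out. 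Proving the lemma cleanly, including points at the vertices of $K$ that are shared by two sides, is where we expect most of the work.
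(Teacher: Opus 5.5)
\textbf{There is a genuine gap: the key step is only announced, and both tools you plan to use for it are wrong as stated.}

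Your reduction to a single block $B$ of $\pi$ is fine. You should say explicitly that if several blocks of $\pi$ are nontrivial unions, merging the pieces of just one of them already gives an intermediate $\tau$. The missing step is the existence of $B_i,B_j\subseteq B$ with $\conv(B_i\cup B_j)$ disjoint from every other $\conv(B_l)$.

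\emph{Your disjointness lemma is false.} In a square $K$, let $A=\{a_1,a_2,a_3\}$ with $a_1,a_2$ on the bottom side and $a_3$ on the top side. Let $C=\{c\}$ with $c$ on the bottom side between $a_1$ and $a_2$. The two sets are not interleaved and do not all lie on one side, yet $c\in[a_1,a_2]\subseteq\conv(A)$. The correct criterion is this: $\conv(A)\cap\conv(C)=\emptyset$ if and only if $A$ and $C$ are not interleaved, and neither set has a point on the boundary arc between two cyclically consecutive points of the other set whenever that arc lies in a single side of $K$.

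\emph{Your selection rule also fails as stated.} Put $q_1,q_2,q_3$ in order along the bottom side and $r$ on the right side. Let $\sigma$ have blocks $\{q_1,r\},\{q_2\},\{q_3\}$ inside one block of $\pi$. The singleton $\{q_3\}$ is innermost and is followed cyclically by $r$, but $\conv\{q_1,q_3,r\}$ contains $q_2$ on its bottom edge. So the innermost choice does not rule out the collinear problem. The direction of the merge matters, and you need an argument that some direction works.

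\textbf{Repairing your route.} Let $B_i$ be a cyclic interval of $B$ running from $a$ to $b$; one exists, since geometric noncrossing implies non-interleaving. Let $c\in B_j$ be the successor of $b$, and $c^-$ the predecessor of $c$ within $B_j$. Symmetrically, let $d\in B_{j'}$ precede $a$, with successor $d^+$ within $B_{j'}$. With the correct criterion:
\begin{itemize}
\item merging $B_i$ with $B_j$ fails only if the arc from $c^-$ to $a$ lies in one side of $K$ and has points of $B$ in its interior;
\item merging $B_i$ with $B_{j'}$ fails only if the arc from $b$ to $d^+$ does.
\end{itemize}
If $j=j'$, then $c^-=d$ and the first merge works. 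If $j\neq j'$, non-interleaving gives the cyclic order $b,c,\dots,c^-,\dots,d^+,\dots,d,a$. If both merges failed, the two flat arcs would share the nondegenerate arc from $c^-$ to $d^+$. They would then lie in the same side and together cover the arc from $b$ forward to $a$. That is impossible: if $a=b$ this arc is all of $\partial K$, and otherwise it is a gap of $B_i$ inside one side containing $c\in B_j$, which would put $c\in\conv(B_i)$.

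\textbf{The paper's route.} Alternatively, follow the paper, which never looks for a pair. It takes the line $L$ of a side of one block's hull with another block beyond it. Because $P\subseteq\partial K$, every other block lies strictly on one side of $L$. Merging the blocks on each side into one gives $\eta$ with $\sigma\le\eta<\pi$, and $\sigma<\eta$ once $k\ge 3$. This replaces the cyclic-order criterion and the collinear case analysis with a single separating line. Only the degenerate case, in which every block is a point or a segment along a side of $K$, needs a separate word.

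Finally, the middle configuration of Figure~\ref{fig:ungraded-configurations} is graded: its inner points lie on the sides of the outer triangle, so the theorem applies. The non-graded example is the rightmost one.
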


For other poset properties, we investigate the noncrossing partitions for 
three special types of configurations which arise in the setting of
Question~\ref{ques:nc-convex-boundary}. 


\begin{defn}\label{def:three-configurations} 
For each pair of nonnegative integers $m$ and $n$, we define three
different types of configurations in the plane, illustrated in Figure~\ref{fig:three-configurations}.
	\begin{enumerate}
		\item An \emph{open cone configuration} $U_{m,n}$ consists of $m+n$ points on the boundary 
		of an affine convex cone with $m$ points on one bounding ray, $n$ points on the other ray,
		and no point on their intersection.
		
		\item A \emph{closed cone configuration} $V_{m,n}$ consists of $m+n+1$ points on the boundary 
		of an affine convex cone with $m$ points on one bounding ray, $n$ points on the other ray,
		and one point on their intersection. Removing the intersection point turns $V_{m,n}$ into $U_{m,n}$.

		\item A \emph{semicircular configuration} $S_{m,n}$ consists of $m+n+2$ points on a
		semicircle, with $m+2$ points on the flat side (including the corners) and $n$ points on 
		the round side (not including the corners).
	\end{enumerate}
	Note that the isomorphism type of the noncrossing partition lattice for each
	type of configuration is determined only by $m$ and $n$, so it is well-defined
	to write e.g. $\NC(U_{m,n})$ without specifying the coordinates of the configuration.
	Also, note that if $P$ consists of $m$ points on one line and $n$ points on a distinct 
	parallel line, then $\NC(P)$ is isomorphic to $\NC(U_{m,n})$. 
\end{defn}

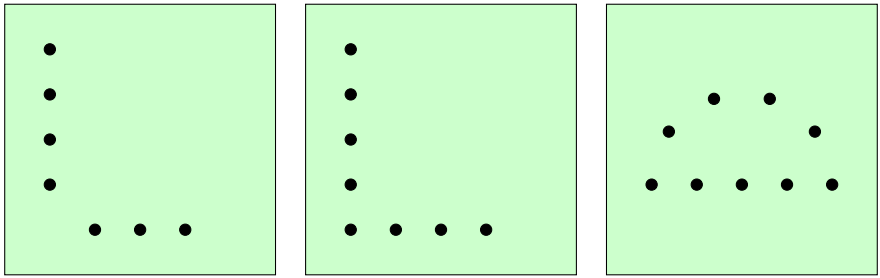
\begin{figure}
	\centering
	\begin{tikzpicture}
	    \begin{scope}[shift={(-4,0)}]
	        \node [shape=rectangle,draw,fill=green!20, inner sep = 1.8cm] () at (0,0) {};
	    
			\node[dot] () at (-1.2,1.2) {};
			\node[dot] () at (-1.2,0.6) {};
			\node[dot] () at (-1.2,0) {};
			\node[dot] () at (-1.2,-0.6) {};
			\node[dot] () at (-0.6,-1.2) {};
			\node[dot] () at (0,-1.2) {};
			\node[dot] () at (0.6,-1.2) {};
	
	    \end{scope}
	
	    \begin{scope}[shift={(0,0)}]
	        \node [shape=rectangle,draw,fill=green!20, inner sep = 1.8cm] () at (0,0) {};
	    
			\node[dot] () at (-1.2,1.2) {};
			\node[dot] () at (-1.2,0.6) {};
			\node[dot] () at (-1.2,0) {};
			\node[dot] () at (-1.2,-0.6) {};
			\node[dot] () at (-0.6,-1.2) {};
			\node[dot] () at (0,-1.2) {};
			\node[dot] () at (0.6,-1.2) {};
			\node[dot] () at (-1.2,-1.2) {};
	        
	    \end{scope}
	
	    \begin{scope}[shift={(4,0)}]
	        \node [shape=rectangle,draw,fill=green!20, inner sep = 1.8cm] () at (0,0) {};
	    
			\node[dot] () at (0,-0.6) {};
			\node[dot] () at (-0.6,-0.6) {};
			\node[dot] () at (-1.2,-0.6) {};
			\node[dot] () at (0.6,-0.6) {};
			\node[dot] () at (1.2,-0.6) {};
			\node[dot] () at ($(36:1.2)+(0,-0.6)$) {};
			\node[dot] () at ($(72:1.2)+(0,-0.6)$) {};
			\node[dot] () at ($(108:1.2)+(0,-0.6)$) {};
			\node[dot] () at ($(144:1.2)+(0,-0.6)$) {};
	        
	    \end{scope}
	\end{tikzpicture}
	\caption{From left to right: the open cone configuration $U_{3,4}$, 
	the closed cone configuration $V_{3,4}$, and the semicircular configuration
	$S_{3,4}$.}
	\label{fig:three-configurations}
\end{figure}

Our next main theorem addresses symmetric chain decompositions in 
noncrossing partition lattices for each of the three configuration types above. 

\begin{mainthm}[Theorems~\ref{thm:uv-scd} and \ref{thm:s-scd}]\label{mainthm:scd}
	For all $m$ and $n$, the bounded graded lattices 
	$\NC(U_{m,n})$, $\NC(V_{m,n})$ and $\NC(S_{m,n})$ admit symmetric chain decompositions.
	Consequently, all three lattices are rank-symmetric.
\end{mainthm}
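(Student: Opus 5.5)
The plan is to build the symmetric chain decompositions inductively, peeling off one point at a time. We use the Greene--Kleitman style technique that works for the Boolean lattice and for $\NC(n)$ (Simion--Ullman). The basic tool is a decomposition lemma. Suppose a bounded graded poset $L$ of rank $r$ is partitioned into pieces $L_1,\dots,L_k$. Suppose each piece is isomorphic, as a graded poset up to a rank shift $s_i$, to a product of posets already known to have SCDs, and that $s_i + \mathrm{rank}(L_i)/2$ equals $r/2$ for every $i$. Then $L$ has an SCD. Products of posets with SCDs have SCDs by the de Bruijn--Greene--Kleitman product argument applied to chains. Gradedness is available from Theorem~\ref{mainthm:graded}, since all three configurations lie on the boundary of a convex polygon. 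The rank of a partition $\pi$ is $|P|-|\pi|$ (number of points minus number of blocks), since every covering relation in these configurations merges two blocks.

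\textbf{Open cone.} For $U_{m,n}$, label the points $a_1,\dots,a_m$ on one ray, outward from the apex side, and $b_1,\dots,b_n$ on the other. Split $\NC(U_{m,n})$ according to the block containing $a_1$, the point nearest the apex on the first ray. If that block lies entirely on one ray, the remaining structure factors as a Boolean piece times a smaller $\NC(U_{m',n})$. If the block meets both rays, say with outermost points $a_i$ and $b_j$, then the block's convex hull separates the configuration. The inner part splits into two collinear pieces, which are Boolean lattices. The outer part is a copy of $\NC(U_{m-i,n-j})$, or of a variant in which the outer points may also join the block. Each piece is then a product of Boolean lattices and smaller cone lattices, with a rank shift. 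The work here is to check the symmetry condition $s_i+\mathrm{rank}(L_i)/2=r/2$ piece by piece. If that bookkeeping fails for the naive split, the fallback is to choose the pieces as intervals $[\sigma,\tau]$ with $\mathrm{rank}(\sigma)+\mathrm{rank}(\tau)=r$, so that symmetry holds automatically.

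\textbf{Closed cone.} For $V_{m,n}$, split on the block $B$ of the apex. If $B$ is a singleton, this piece is $\NC(U_{m,n})$ shifted by $0$. It is not centered at $(m+n)/2$, so the singleton piece must be paired with the non-singleton pieces. The cleaner route is to note that a block containing the apex together with $a_i$ and $b_j$ cuts off collinear Boolean regions and an outer $U$ or $V$ configuration. I would therefore reorganize by the outermost points of the apex block, treating ``apex alone'' as the case $i=j=0$. Alternatively, I would use an explicit rank-preserving bijection with a known lattice, for example by showing $\NC(V_{m,n})$ has the same recursive structure as $\NC(U_{m+1,n})$ up to symmetric pieces.

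\textbf{Semicircle.} For $S_{m,n}$, split on the block containing the left corner $c$. If that block contains round-side points, its hull cuts the configuration into a region that is again a semicircular configuration, a convex-position region (a classical $\NC(k)$, which has an SCD by Simion--Ullman), and collinear Boolean regions. If it contains only flat-side points, we get a Boolean factor times a smaller $S$. This gives a recursion with factors $\bool$, $\NC(k)$, and smaller $S$, $U$, or $V$ lattices. Some separated regions have points on a segment plus an arc, which are cone-like with one collinear side, and their $\NC$ lattices are isomorphic to $\NC(U)$ or $\NC(V)$ types. That is why the $U$/$V$ case must be settled first.

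The main obstacle I expect is the symmetry bookkeeping. Pieces indexed by the apex or corner block are products that need not be centered at rank $r/2$. The likely fix is to group the pieces into pairs related by the natural rank-reversing map (Kreweras complement type) so that each union is symmetric. Failing that, I would construct the chains directly by an explicit bracket-matching (parenthesization) rule on a linear encoding of partitions, as in Simion--Ullman, and verify symmetry that way.
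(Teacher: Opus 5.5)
\textbf{There is a genuine gap: you never produce a centered decomposition.} Your centering criterion is exactly the paper's Remark~\ref{rem:scd}. The real content of the proof is exhibiting a decomposition that satisfies it, and the splits you actually describe fail it.
\begin{itemize}
\item In $\NC(U_{m,n})$, where $r=m+n-1$: if the block of $a_1$ is exactly $\{a_1,\dots,a_i\}$, the piece is $\NC(U_{m-i,n})$ shifted by $i-1$. It is centered only if $i=2$.
\item Still in $\NC(U_{m,n})$: if the block is exactly $\{a_1,\dots,a_i,b_l,\dots,b_j\}$ with $l\ge 2$ and $(i,j)\neq(m,n)$, the piece is $\bool(l-2)\times\NC(U_{m-i,n-j})$ shifted by $i+j-l$. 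Its center is $(m+n+i+j-l-3)/2$, which equals $r/2$ only when $i+j-l=2$.
\item In $\NC(V_{m,n})$, where $r=m+n$: the piece whose apex block is $\{z,x_1,\dots,x_i,y_1,\dots,y_j\}$ is $\NC(U_{m-i,n-j})$ shifted by $i+j$. It is centered only when $i+j=1$, so reorganizing by the outermost points of the apex block does not help.
\item For $S_{m,n}$: fixing the exact block $K$ of a corner gives a centered piece only when $|K|$ is one more than the number of nonempty regions $K$ cuts off.
\end{itemize}

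The remedies you list are not available as stated.
\begin{itemize}
\item These lattices are generally not self-dual, so there is no global Kreweras-type complement with which to pair pieces.
\item A rank-preserving bijection $\NC(V_{m,n})\to\NC(U_{m+1,n})$ cannot exist, since $|\NC(V_{2,2})|=33\neq 37=|\NC(U_{3,2})|$.
\item The bracket-matching construction is only named, not carried out.
\end{itemize}
Separately, the regions cut off by chords in a semicircular configuration have at most one collinear side. They are of type $S$, convex, or collinear, never of cone type, so the $S$ case does not depend on $U$ or $V$.

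\textbf{The missing idea is to fix only \emph{one} partner of an extreme hull vertex, not its whole block.} The paper splits $\NC(U_{m,n})$ on the outermost point $x_m$.
\begin{itemize}
\item One piece consists of the partitions where $x_m$ is a singleton or is joined to $x_{m-1}$. This is a union of two intervals, $[\hat{0},\cdot\,]$ and $[\,\cdot,\hat{1}]$, and is isomorphic to $\NC(U_{m-1,n})\times\bool(1)$ by Lemma~\ref{lem:remove-point}.
\item For each $k$, the piece where $x_m$ is joined to $y_k$ but to none of $x_{m-1},y_1,\dots,y_{k-1}$ is the interval from the atom whose only nontrivial block is $\{x_m,y_k\}$ up to a coatom. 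It is isomorphic to $\NC(U_{m-1,k-1})\times\bool(n-k)$.
\end{itemize}
Every piece therefore runs from rank $0$ to $r$, or from rank $1$ to $r-1$, so centering is automatic. Induction plus the product theorem then finishes the argument.

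The same split works verbatim for $V_{m,n}$: the apex simply lies in the inner region and is never split on. It also works for $S_{m,n}$ using the corner $x_{m+1}$, with pieces $\NC(S_{m-1,k-1})\times\NC(n-k+1)$.

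Your choice of $a_1$ can be rescued the same way. Keep the piece where $a_1$ is a singleton or joined to $a_2$. For the rest, require $a_1$ not joined to $a_2$ and index by the \emph{outermost} $b_k$ in its block. This yields atom-to-coatom intervals isomorphic to $\bool(k-1)\times\NC(U_{m-1,n-k})$.
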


It is worth noting that despite being rank-symmetric, these posets are generally
not self-dual. Combined with the observations presented in \cite{cdhm24}, it would
seem that self-duality is a relatively rare property among noncrossing partition lattices
from configurations.

Our final main theorem provides multivariate generating functions which enumerate each of the
three lattices. 

\begin{mainthm}[Theorems~\ref{thm:u-gf}, \ref{thm:v-gf} and \ref{thm:s-gf}]\label{mainthm:gf}
	Let $u_{m,n}$, $v_{m,n}$ and $s_{m,n}$ denote the sizes of $\NC(U_{m,n})$, $\NC(V_{m,n})$
	and $\NC(S_{m,n})$ respectively. Then the corresponding multivariate generating functions
	are as follows:
	\begin{enumerate}
		\item $\displaystyle U(x,y) = \sum_{\substack{m,n\geq 0}} u_{m,n}x^m y^n = \frac{x + y - 2xy}{1-2x-2y+3xy}$;
		\item $\displaystyle V(x,y) = \sum_{\substack{m,n\geq 0}} v_{m,n}x^m y^n = \frac{1}{1-2x-2y+3xy}$;
		\item $\displaystyle S(x,y) = \sum_{\substack{m,n\geq 0}} s_{m,n}x^m y^n = \left(\frac{C(y)-1-y}{y^2}\right)
		\left(\frac{1}{1-x(1+C(y))}\right)$,
	\end{enumerate}
	where $C(y) = (1-\sqrt{1-4y})/2y$ is the generating function for the Catalan numbers.
\end{mainthm}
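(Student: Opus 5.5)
We would prove the three formulas by finding, for each configuration type, a recursive decomposition of a noncrossing partition around a distinguished point, turning that into a recurrence for $u_{m,n}$, $v_{m,n}$, $s_{m,n}$, and then checking that the proposed rational or algebraic function satisfies the recurrence and the initial values. The closed cone $V_{m,n}$ comes first, because $U_{m,n}$ will be expressed in terms of it. Label the apex $o$, the points on one ray $a_1,\dots,a_m$ in order of increasing distance from $o$, and the points on the other ray $b_1,\dots,b_n$ similarly. The target identity $V(x,y)(1-2x-2y+3xy)=1$ is equivalent to
\[
v_{m,n}=2v_{m-1,n}+2v_{m,n-1}-3v_{m-1,n-1}
\]
for $(m,n)\neq(0,0)$, with $v_{0,0}=1$ and values with a negative index taken to be $0$. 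Small cases confirm this: $v_{1,0}=2$, and $v_{1,1}=5$ since $V_{1,1}$ is a triangle.

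\textbf{The closed cone.} To prove the recurrence we would analyze the outermost points $a_m$ and $b_n$. The key geometric fact is that any segment from $a_i$ to $b_j$ separates the cone into two parts: a triangle containing $o,a_1,\dots,a_{i-1},b_1,\dots,b_{j-1}$, and an outer region containing $a_{i+1},\dots,a_m,b_{j+1},\dots,b_n$.

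Deleting $a_m$ maps $\NC(V_{m,n})$ onto $\NC(V_{m-1,n})$. We would show that each fibre has size $1$ or $2$. It has size $2$ (put $a_m$ in a singleton, or join it to the block of $a_{m-1}$) exactly when no block separates $a_m$ from $a_{m-1}$; that is, when the block of $a_{m-1}$ is the outermost block meeting the first ray. Counting the fibres of size $1$ and $2$ symmetrically for $b_n$, and using inclusion--exclusion over the two corners, should produce the correction term $-3v_{m-1,n-1}$. An equivalent route is to condition on whether $a_m$ and $b_n$ lie in a common block. If they do, the block's hull cuts off everything outside, and the remaining points form a smaller closed cone.

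I expect this bookkeeping to be the main obstacle: the exact coefficient $3$ must come out of the case analysis. To control it, we would verify the recurrence against brute-force counts for $m,n\le 3$ before writing the general bijective argument.

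\textbf{The open cone.} Item (1) is then equivalent to
\[
u_{m,n}=v_{m-1,n}+v_{m,n-1}-2v_{m-1,n-1}.
\]
We would prove this by a bijection between partitions of $U_{m,n}$ and partitions of $V$-configurations. We classify a partition of $U_{m,n}$ by its innermost block, the one containing $a_1$ or $b_1$. If this block lies on one ray, its innermost point can play the role of an apex, giving a closed cone with one fewer point on that ray. Partitions counted twice or wrongly (roughly, those in which $a_1$ and $b_1$ behave compatibly) are corrected by the $-2v_{m-1,n-1}$ term. As a check, $u_{1,0}=1$, $u_{1,1}=2$ and $u_{0,0}=0$ all match.

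\textbf{The semicircle.} Item (3) says $s_{0,n}=C_{n+2}$, which holds because $S_{0,n}$ is in convex position. Each of the $m$ interior points of the flat side contributes a factor $1+C(y)$. The planned decomposition follows the interior flat points $p_1,\dots,p_m$ from left to right. Each $p_i$ is either merged into the block passing over it from the left, contributing the term $1$, or it begins a new block that reaches some round points. We would argue that the round points enclosed between $p_i$ and the next structure form an independent convex-position piece, counted by $C(y)$. Iterating this yields the geometric series $1/(1-x(1+C(y)))$, multiplied by the outer convex polygon on the corners and the remaining round points, which gives the factor $(C(y)-1-y)/y^2$. We would confirm the product formula on small values, such as $s_{1,0}=3$ (three collinear points, so $\bool$ of height $2$, size $4$) against the coefficient of $x$, adjusting the decomposition if these disagree.
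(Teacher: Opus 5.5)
\textbf{There is a genuine gap: none of the three recurrences is actually derived.} Your translation of items (1) and (2) into recurrences is correct. However, the recurrence for $v_{m,n}$ is never proved, and item (1) also rests on it, since you derive $U$ from $V$. The one concrete claim you offer for it is false. Deleting $a_m$ never has a fibre of size $1$: making $a_m$ a singleton and adding it to the block of $a_{m-1}$ (or of $o$ if $m=1$) are always both allowed. Fibres are also often larger than $2$. Already in $V_{1,1}$ the partition $\{\{o\},\{b_1\}\}$ has three lifts: $\{\{o\},\{a_1\},\{b_1\}\}$, $\{\{o,a_1\},\{b_1\}\}$ and $\{\{o\},\{a_1,b_1\}\}$. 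Over the discrete partition of $V_{m-1,n}$ there are $n+2$ lifts. So the inclusion--exclusion that ``should produce'' $-3v_{m-1,n-1}$ has nothing under it.

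\textbf{The fix for the cones} is to count the extra lifts with your own segment fact. They are exactly the partitions in which the block of $a_m$ contains $b_k$ but none of $a_{m-1},b_1,\dots,b_{k-1}$. The segment $a_mb_k$ splits these as $\NC(V_{m-1,k-1})\times\bool(n-k)$. Hence
\[
v_{m,n}=2v_{m-1,n}+\sum_{k=1}^{n}v_{m-1,k-1}2^{n-k}\qquad(m\ge 1,\ a_0:=o),
\]
that is, $V_m(y)=\frac{2-3y}{1-2y}V_{m-1}(y)$ with $V_0(y)=\frac{1}{1-2y}$. This sums to item (2), and the $3$ comes from the algebra $2(1-2y)+y$, not from a bijection.

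Alternatively, your open-cone sketch can be made rigorous:
\begin{itemize}
\item partitions where the block of $a_1$ stays on its ray number $v_{m-1,n}-v_{m-1,n-1}$ (let $a_1$ play the apex);
\item symmetrically, $v_{m,n-1}-v_{m-1,n-1}$ for $b_1$;
\item these two sets intersect in $v_{m-1,n-1}$ elements;
\item every remaining partition has $a_1\sim b_1$, and there are again $v_{m-1,n-1}$ of them.
\end{itemize}
Adding the resulting relation to the apex relation $v_{m,n}=u_{m,n}+v_{m-1,n}+v_{m,n-1}-v_{m-1,n-1}$ (is $o$ alone, or joined to $a_1$, $b_1$, or both?) gives exactly your recurrence with the $3$.

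\textbf{The semicircle.} A factor $1+C(y)$ per flat point is the right shape, but the left-to-right scan is never defined. Its literal reading is false: you treat the two corners plus the leftover round points as an independent convex factor. But putting $x_0$ and $x_{m+1}$ in one block forces every flat point into it. For $m=1,n=0$ only one of the four partitions has $x_0\sim x_2$, where an independent factor would require two. (Also, $s_{1,0}=4$, not $3$.)

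What works is to peel a corner, as the paper does. Either $x_{m+1}$ is a singleton or it joins the block of $x_m$, giving $2s_{m-1,n}$. Otherwise, let $y_k$ be the round point of its block nearest $x_0$; the chord $x_{m+1}y_k$ separates a copy of $S_{m-1,k-1}$ from a convex piece counted by $C_{n-k+1}$. Summing gives $S_m(y)=(2+C(y)-1)S_{m-1}(y)=(1+C(y))S_{m-1}(y)$. So the base factor $(C(y)-1-y)/y^2$ counts the leftover $S_{0,n}$ on $x_0,x_1$, not a polygon on the original corners. The paper uses the same corner-peeling for $U$, and then obtains $V$ from $U$ via the apex relation above.
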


It is worth noting that, in a sense, the generating function for counting semicircular 
noncrossing partitions interpolates between $C(y)$, the generating function which
enumerates classical noncrossing partitions, and $1/(1-2x)$, the generating function
which enumerates noncrossing partitions of collinear points. 

The article is structured as follows. In Section~\ref{sec:prelim}, we introduce some preliminary material on 
posets and noncrossing partitions before proving Theorem~\ref{mainthm:graded} and investigating a particular
example which is common to all three configuration types. The proofs of Theorems~\ref{mainthm:scd} and \ref{mainthm:gf}
are divided across Sections~\ref{sec:cone-config} and \ref{sec:semicirc-config}, which examine 
(open and closed) cone configurations and semicircular configurations, respectively.

\section{Noncrossing partitions from configurations}\label{sec:prelim}

In this section we recall some background on posets and noncrossing partitions. See
\cite[Section 2]{cdhm24} for more detail. 

\begin{defn}\label{def:partition-lattice}
	Let $S$ be a finite set. A \emph{partition} of $S$ is a collection of pairwise disjoint
	subsets of $S$ (called \emph{blocks}) whose union is $S$. The set of all partitions of $S$, denoted 
	$\Pi(S)$, is  partially ordered under refinement, i.e. $\pi \leq \mu$ in
	$\Pi(S)$ if every block of $\mu$ is a union of blocks in $\pi$. Moreover, 
	$\Pi(S)$ has the following properties.
	\begin{itemize}
		\item $\Pi(S)$ is \emph{bounded}: it has a unique minimum $\hat{0}$ and a
		unique maximum $\hat{1}$.	
		\item $\Pi(S)$ is a \emph{lattice}: each pair of elements $\pi$ and $\mu$ has 
		a unique meet (greatest lower bound) $\pi \wedge \mu$ and a unique join
		(least upper bound) $\pi \vee \mu$.
		\item $\Pi(S)$ is \emph{graded}: if $bl(\pi)$ is the number of blocks in $\pi$, then
		the function $\rho\colon \Pi(S) \to \mathbb{N}$ given by
		$\rho(\pi) = |S| - bl(\pi)$ is a \emph{rank function} for $\Pi(S)$, which
		means that $\rho(\pi) \leq \rho(\mu)$ when $\pi \leq \mu$ and $\rho(\pi) + 1 = \rho(\mu)$
		when $\pi < \mu$ and there is no $\eta \in \Pi(S)$ with $\pi < \eta < \mu$.
	\end{itemize}
	If $|S| = n$, then $\hat{0}$ has rank $0$ and $\hat{1}$ has rank $n-1$. The \emph{atoms}
	and \emph{coatoms} of $\Pi(S)$ are the elements of rank $1$ and $n-2$, respectively.
\end{defn}

When the set $S$ has geometric meaning, we can restrict $\Pi(S)$ to an interesting subposet.

\begin{defn}
	Let $P$ be a finite subset of $\C$ (referred to here as a \emph{configuration}). For each subset $A\subseteq P$, its \emph{convex hull} 
	$\conv(A)$ is the smallest convex set in $\C$ which contains $P$. Since $P$ is assumed
	to be finite, $\conv(A)$ is necessarily a convex polygon with up to $|A|$ vertices.
	A partition of the configuration $P$ is \emph{noncrossing} if the convex hulls of its
	blocks are pairwise disjoint, and the set of all noncrossing partitions $\NC(P)$ is
	a subposet of the partition lattice $\Pi(P)$.
\end{defn}

The properties held by the poset $\NC(P)$ can vary dramatically based on the choice of $P$.
For example, $\NC(P)$ is always a bounded lattice, but it is not always graded \cite{cdhm24}. 
In the cases presented in this article, however, we do not need to worry about these exceptions.

\begin{thm}\label{thm:graded}
	If $P$ is a finite subset of $\C$ which lies on the boundary of a convex polygon,
	then $\NC(P)$ is graded.
\end{thm}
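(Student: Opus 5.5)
The plan is to show that $\rho(\pi) = |P| - bl(\pi)$, restricted from $\Pi(P)$, is a rank function on $\NC(P)$. Since $\rho$ is already order-preserving, it suffices to prove that every cover relation $\pi \lessdot \mu$ in $\NC(P)$ merges exactly two blocks of $\pi$. Equivalently, whenever $\pi < \mu$ in $\NC(P)$, I will construct some $\eta \in \NC(P)$ with $\pi < \eta \leq \mu$ that is obtained from $\pi$ by merging exactly two blocks.

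\textbf{Reduction to one block.} Choose a block $B$ of $\mu$ that is a union of $k \geq 2$ blocks $A_1,\dots,A_k$ of $\pi$. If $\eta$ merges two of the $A_i$, the merged block has convex hull inside $\conv(B)$. That hull is therefore automatically disjoint from the hulls of all blocks of $\pi$ lying outside $B$, because those are contained in other blocks of $\mu$. So the problem becomes: among $A_1,\dots,A_k$, find two whose union has convex hull disjoint from $\conv(A_j)$ for every remaining $j$. Next I pass to $Q = \conv(B)$. Any point of $B$ lies on the boundary of the polygon containing $P$ and lies in $Q$, which is contained in that polygon. Hence it lies on $\partial Q$, so $B$ is again a configuration on the boundary of a convex polygon (possibly a segment).

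\textbf{Cyclic order and the candidate merge.} Order $B$ cyclically along $\partial Q$, with collinear points on a side ordered along that side. Pairwise disjointness of the hulls forces the $A_i$ to be noncrossing in this cyclic order. In particular, if $a,c \in A_i$ with $b \in A_j$ strictly between them on a common side, then $b \in \conv(A_i)$, a contradiction. By the standard argument for noncrossing partitions of a cycle, some block, say $A_1$, occupies a cyclically consecutive run of $B$. Let $A_2$ be the block containing the point immediately following that run. The candidate is to merge $A_1$ and $A_2$; the freedom to choose which consecutive run and which end to extend from will be used below.

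\textbf{Main obstacle: geometric disjointness.} The key step is to prove that $\conv(A_1 \cup A_2)$ misses each $\conv(A_j)$ for $j \geq 3$. Combinatorial noncrossing alone is not enough here. For instance, when $Q$ is a segment, cyclic noncrossing does not imply disjoint hulls, so collinearity must be handled explicitly. I will prove a separation lemma: if the points of $A_1 \cup A_2$ lie in an arc $\alpha$ of $\partial Q$, and the remaining points lie in the complementary arc, then some line separates the two sets, unless a remaining point lies on a side of $Q$ strictly between two merged points.

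My first attempt is to choose the chord joining the endpoints of the arc spanned by $A_1 \cup A_2$, which is cut off by the cyclic order, and show it weakly separates the hulls. Second, I will check the degenerate case in which both endpoints sit on a single side. In that case the choice of the interval block $A_1$ and of its neighbor (on the side where no collinear obstruction occurs) must be made carefully. It may be necessary to take $A_1$ to be an innermost interval block relative to $A_2$, so that no $A_j$ has points between the extreme points of $A_1 \cup A_2$ on a common side. Once this lemma is in place, the merged partition $\eta$ lies in $\NC(P)$, satisfies $\pi < \eta \leq \mu$, and has $\rho(\eta) = \rho(\pi) + 1$. This proves that $\NC(P)$ is graded.
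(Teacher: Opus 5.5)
Your reductions are sound: it suffices to produce, for $\pi<\mu$, an $\eta$ with $\pi<\eta\le\mu$ that merges exactly two blocks. You may also restrict to one block $B=A_1\cup\cdots\cup A_k$ of $\mu$ inside $Q=\conv(B)$, the $A_i$ are cyclically noncrossing with the extra collinear condition, and some $A_1$ is a cyclic run. However, the step that carries the whole theorem is only a plan. That step is exhibiting two blocks whose union has hull missing every other $\conv(A_j)$, and the candidate you name can fail.

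Take $P=\{0,1,2,2+i\}$ (a copy of $T_3$) and $\pi=\{\{0,2+i\},\{1\},\{2\}\}<\mu=\hat{1}$. The block $\{2\}$ is a run, and the next point counterclockwise is $2+i$, so your $A_2=\{0,2+i\}$. But $\conv(\{0,2,2+i\})$ contains $1$.

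Your separation lemma does not repair this. Its hypothesis, that $A_1\cup A_2$ fills an arc of $\partial Q$ with the other blocks in the complementary arc, is false in general: $A_2$ can have other blocks nested in several of its gaps, as $1$ is here. The correct statement is per block. Each remaining $A_j$ lies in one gap of $A_1\cup A_2$, between cyclically consecutive points $a,c$. Then $\conv(A_j)$ misses $\conv(A_1\cup A_2)$ unless $a$ and $c$ (necessarily one from each of $A_1,A_2$) lie on a common side of $Q$ with points of $A_j$ strictly between them. You flag that the choice of run and end matters. But ``take an innermost interval block'' is not a rule shown to avoid this situation, and you give no argument that a good choice always exists.

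The paper avoids the pairwise test altogether. It takes a line $L$ through a side of some $\conv(R_1)$ that is a genuine chord of $Q$ (not contained in $\partial Q$) with other blocks beyond it. By convex position, $L$ meets $\partial Q$ only in the two endpoints of that side, both in $R_1$, so every other block lies in one open half-plane of $L$. Merging all blocks on each side therefore gives a noncrossing $\eta$ with two blocks inside $B$, and $\pi<\eta<\mu$ whenever $k\ge 3$. Hence a cover merges exactly two blocks.

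The paper's write-up does not treat the case where no block has such a side (for example, when all blocks are singletons). In that case every block is a run contained in a single side of $Q$. The same two-group split then works with a line through two inter-block gaps lying on different sides of $Q$, or with any cut between consecutive blocks if $B$ is collinear.

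If you want to keep your pairwise formulation, this split also gives it by induction on $k$. Recurse into a group with at least two blocks: anything merged there stays inside that group's hull, and hence away from the other group.
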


\begin{proof}
	Let $\rho$ be the rank function for $\Pi(P)$ described in Definition~\ref{def:partition-lattice}
	and let $\pi < \mu$ be a covering relation in $\NC(P)$, which is to say that $\pi$ and $\mu$ 
	are noncrossing partitions of $P$ with the property that there is no $\eta \in \NC(P)$
	with $\pi < \eta < \mu$. By definition of the refinement order, this means that $\mu$ is
	obtained by removing some blocks $R_1,\ldots,R_k$ from $\pi$ and replacing them with the union of 
	those blocks, where $k\geq 2$.
	If we can show that $k=2$, then we know that $\rho(\pi) + 1 = \rho(\mu)$ and therefore $\rho$
	is a rank function for $\NC(P)$.
	
	Consider the removed block $R_1$. One of the sides of the convex hull $\conv(R_1)$ must
	belong to a line $L$ such that some other $\conv(R_i)$ lies on the opposite side, i.e. 
	so that $L$ separates the interiors of $\conv(R_1)$ and $\conv(R_i)$. Moreover, because all of
	$P$ lies on the boundary of a convex polygon, the convex hull of each other block lies 
	entirely within one of the half-planes bounded by $L$. Without loss of generality, 
	suppose that the interiors of the convex hulls of $R_1,\ldots,R_{i-1}$ lie on one side
	of $L$ and that the interiors of the convex hulls of $R_i,\ldots,R_k$ lie on the other.
	
	Now, define $\eta \in \NC(P)$ from $\pi$ by replacing
	$R_1,\ldots,R_{i-1}$ with their union and replacing $R_i,\ldots,R_k$ with their union.
	It is then clear that $\pi \leq \eta < \mu$, with equality if and only if $k=2$. 
	Since we assumed that $\pi < \mu$ was a covering relation, we therefore must have $k=2$
	and the proof is complete.
\end{proof}

Other potential properties of $\NC(P)$ require a bit of introduction.

\begin{defn}
First, $\NC(P)$ is \emph{self-dual} if there is a bijection $f$ from  $\NC(P)$ to itself such that
$\pi \leq \mu$ if and only if $f(\mu) \leq f(\pi)$. If $\NC(P)$ is graded and 
the number of elements with rank $k$ is equal to the number of elements with rank $n-k-1$
for all $k \in \{0,\ldots,n-1\}$, then $\NC(P)$ is \emph{rank-symmetric}.
Similarly, if $A$ is a subposet of $\NC(P)$ such that the number of elements of rank $k$ in $A$
is equal to the number of elements of rank $n-k-1$ in $A$ (both with respect to the rank function 
of $\NC(P)$), then we say that $A$ is \emph{centered}.
Given $\pi \leq \mu$ in $\NC(P)$, the \emph{interval} $[\pi,\mu]$ is defined to be the
subposet $\{\eta \in \NC(P) \mid \pi \leq \eta \leq \mu \}$. 
Also, a totally ordered subposet of $\NC(P)$ is called a \emph{chain};
a chain is \emph{maximal} if it is not properly contained in another chain and \emph{saturated}
if whenever $\pi < \eta < \mu$ in $\NC(P)$ with $\pi$ and $\mu$ in the chain, then
$\eta$ must belong to the chain as well. 
Finally, a graded poset admits a \emph{symmetric chain decomposition} if its elements can be expressed as the
disjoint union of centered saturated chains. 
\end{defn}

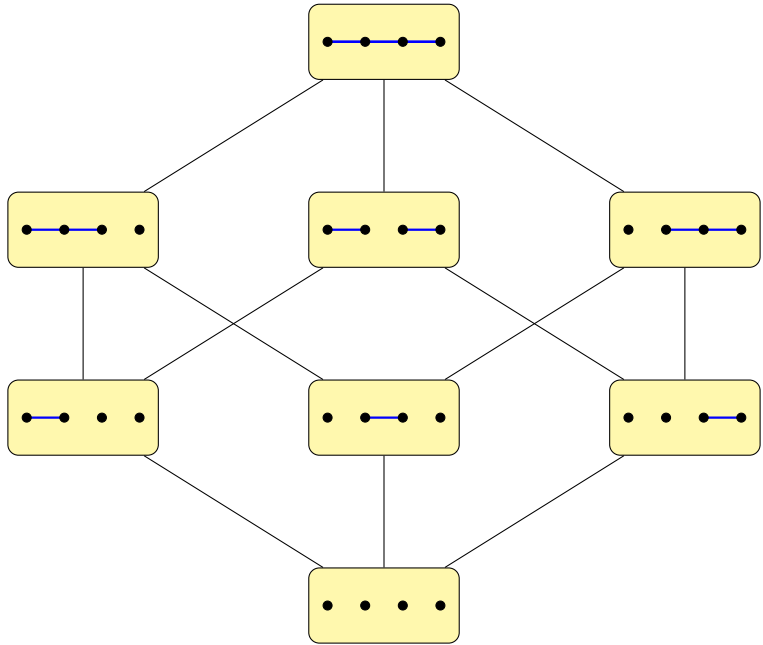
\begin{figure}
	\centering
	\begin{tikzpicture}
        \node[YellowRect] (pa) at (0,0) {};
        
        \foreach \i in {1,2,3} {
            \node[YellowRect] (pb\i) at (-8+4*\i,-2.5) {};
        }
        
        \foreach \i in {1,2,3} {
            \node[YellowRect] (pc\i) at (-8+4*\i,-5) {};
        }
        
        \node[YellowRect] (pd) at (0,-7.5) {};
        
            
        \foreach \i in {1,2,3} {
            \draw (pa) -- (pb\i);
        }
            
        \foreach \i in {1,2,3} {
            \draw (pd) -- (pc\i);
        }    
            
        \draw (pb1) edge (pc1) edge (pc2);
        \draw (pb2) edge (pc1) edge (pc3);
        \draw (pb3) edge (pc2) edge (pc3);
        
        \begin{scope}[shift={(pa)}]
            \makelinepoints
            \filldraw[BluePoly] (1) -- (2) -- (3) -- (4) -- cycle;
            \drawlinepoints
        \end{scope}
        
        \begin{scope}[shift={(pb1)}]
            \makelinepoints
            \draw[BluePoly] (1) -- (2) -- (3);
            \drawlinepoints
        \end{scope}
        
        \begin{scope}[shift={(pb2)}]
            \makelinepoints
            \draw[BluePoly] (1) -- (2);
            \draw[BluePoly] (3) -- (4);
            \drawlinepoints
        \end{scope}
        
        \begin{scope}[shift={(pb3)}]
            \makelinepoints
            \draw[BluePoly] (2) -- (3) -- (4);
            \drawlinepoints
        \end{scope}
        
        \begin{scope}[shift={(pc1)}]
            \makelinepoints
            \draw[BluePoly] (1) -- (2);
            \drawlinepoints
        \end{scope}
        
        \begin{scope}[shift={(pc2)}]
            \makelinepoints
            \draw[BluePoly] (2) -- (3);
            \drawlinepoints
        \end{scope}
        
        \begin{scope}[shift={(pc3)}]
            \makelinepoints
            \draw[BluePoly] (3) -- (4);
            \drawlinepoints
        \end{scope}
        
        
        \begin{scope}[shift={(pd)}]
            \makelinepoints
            \drawlinepoints
        \end{scope}
    \end{tikzpicture}	
    \caption{The lattice of noncrossing partitions $\NC(P_4)$ defined in
    Example~\ref{ex:boolean} is isomorphic to the Boolean lattice $\bool(3)$.}
    \label{fig:ncp-boolean}
\end{figure}

The two most natural examples of $\NC(P)$ arise when the configuration $P$ either lies
on a line or on a circle, and each of these produces a familiar lattice with useful properties.
\begin{example}\label{ex:boolean}
	If $P_n$ consists of points $p_1,\ldots,p_n$ which are arranged in order along a line,
	then each noncrossing partition of $P_n$ is determined by a choice for each 
	$i \in \{1,\ldots,n-1\}$ of whether $p_i$ and $p_{i+1}$ belong to the same block. 
	This provides an isomorphism between $\NC(P_n)$ and the poset of all subsets of 
	$\{1,\ldots,n-1\}$ under inclusion, known as the \emph{Boolean lattice} $\bool(n-1)$. 
	See Figure~\ref{fig:ncp-boolean} for an illustration. Note in particular that $\NC(P_n)$
	is graded and self-dual (and therefore rank-symmetric). Moreover, the size of 
	$\NC(P_n)$ is $2^{n-1}$, and the generating function for this sequence is 
	\[B(x) = \sum_{n\geq 0} 2^{n-1} x^n = \frac{x}{1-2x}.\]
	Finally, the Boolean lattice (and more generally any product of chains) admits a symmetric
	chain decomposition \cite{de-bruijn}.
\end{example}

\begin{example}
	If the configuration $Q_n$ is the vertex set of a convex $n$-gon, then
	$\NC(Q_n)$ is the classical \emph{lattice of noncrossing partitions} $\NC(n)$.
	This lattice is graded, rank-symmetric, and counted by the $n$-th \emph{Catalan number} 
	$C_n = \frac{1}{n+1} \binom{2n}{n}$ \cite{kreweras72}; it is also
	self-dual and admits a symmetric chain decomposition \cite{simion-ullman91}.
	Moreover, the generating function for this sequence is 
	\[
		C(x) = \sum_{n\geq 0} C_n x^n = \frac{1 - \sqrt{1-4x}}{2x}.
	\]
	See Figure~\ref{fig:ncp-classical} for an illustration of $\NC(4)$.
\end{example}

\begin{figure}
	\centering
	\begin{tikzpicture}
        \node [shape=circle,draw,fill=yellow!40,inner sep = 0.6cm] (pa) at (0,0) {};
        
        \foreach \i in {1,...,6} {
            \node [shape=circle,draw,fill=yellow!40,inner sep = 0.6cm] (pb\i) at (-7+2*\i,-2.5) {};
        }
        
        \foreach \i in {1,...,6} {
            \node [shape=circle,draw,fill=yellow!40,inner sep = 0.6cm] (pc\i) at (-7+2*\i,-5.5) {};
        }
        
        \node [shape=circle,draw,fill=yellow!40,inner sep = 0.6cm] (pd) at (0,-8) {};
        
            
        \foreach \i in {1,...,6} {
            \draw (pa) -- (pb\i.north);
        }
            
        \foreach \i in {1,...,6} {
            \draw (pd) -- (pc\i.south);
        }    
            
        \draw (pb1.south) edge (pc1.north) edge (pc2.north);
        \draw (pb2.south) edge (pc1.north) edge (pc3.north) edge (pc5.north);
        \draw (pb3.south) edge (pc2.north) edge (pc3.north) edge (pc6.north);
        \draw (pb4.south) edge (pc2.north) edge (pc4.north) edge (pc5.north);
        \draw (pb5.south) edge (pc1.north) edge (pc4.north) edge (pc6.north);
        \draw (pb6.south) edge (pc5.north) edge (pc6.north);
        
        \begin{scope}[shift={(0,0)}]
            \makepoints
            \filldraw[BluePoly] (1) -- (2) -- (3) -- (4) -- cycle;
            \drawpoints
        \end{scope}
        
        \begin{scope}[shift={(-5,-2.5)}]
            \makepoints
            \draw[BluePoly] (2) -- (3);
            \draw[BluePoly] (1) -- (4);
            \drawpoints
        \end{scope}
        
        \begin{scope}[shift={(-3,-2.5)}]
            \makepoints
            \filldraw[BluePoly] (2) -- (3) -- (4) -- cycle;
            \drawpoints
        \end{scope}
        
        \begin{scope}[shift={(-1,-2.5)}]
            \makepoints
            \filldraw[BluePoly] (1) -- (2) -- (4) -- cycle;
            \drawpoints
        \end{scope}
        
        \begin{scope}[shift={(1,-2.5)}]
            \makepoints
            \filldraw[BluePoly] (1) -- (3) -- (4) -- cycle;
            \drawpoints
        \end{scope}
        
        \begin{scope}[shift={(3,-2.5)}]
            \makepoints
            \filldraw[BluePoly] (1) -- (2) -- (3) -- cycle;
            \drawpoints
        \end{scope}
        
        \begin{scope}[shift={(5,-2.5)}]
            \makepoints
            \draw[BluePoly] (1) -- (2);
            \draw[BluePoly] (3) -- (4);
            \drawpoints
        \end{scope}
        
        \begin{scope}[shift={(-5,-5.5)}]
            \makepoints
            \draw[BluePoly] (2) -- (3);
            \drawpoints
        \end{scope}
        
        \begin{scope}[shift={(-3,-5.5)}]
            \makepoints
            \draw[BluePoly] (1) -- (4);
            \drawpoints
        \end{scope}
        
        \begin{scope}[shift={(-1,-5.5)}]
            \makepoints
            \draw[BluePoly] (2) -- (4);
            \drawpoints
        \end{scope}
        
        \begin{scope}[shift={(1,-5.5)}]
            \makepoints
            \draw[BluePoly] (1) -- (3);
            \drawpoints
        \end{scope}
        
        \begin{scope}[shift={(3,-5.5)}]
            \makepoints
            \draw[BluePoly] (3) -- (4);
            \drawpoints
        \end{scope}
        
        \begin{scope}[shift={(5,-5.5)}]
            \makepoints
            \draw[BluePoly] (1) -- (2);
            \drawpoints
        \end{scope}
        
        
        \begin{scope}[shift={(0,-8)}]
            \makepoints
            \drawpoints
        \end{scope}
    \end{tikzpicture}
    \caption{The classical lattice of noncrossing partitions $\NC(4)$}
    \label{fig:ncp-classical}	
\end{figure}
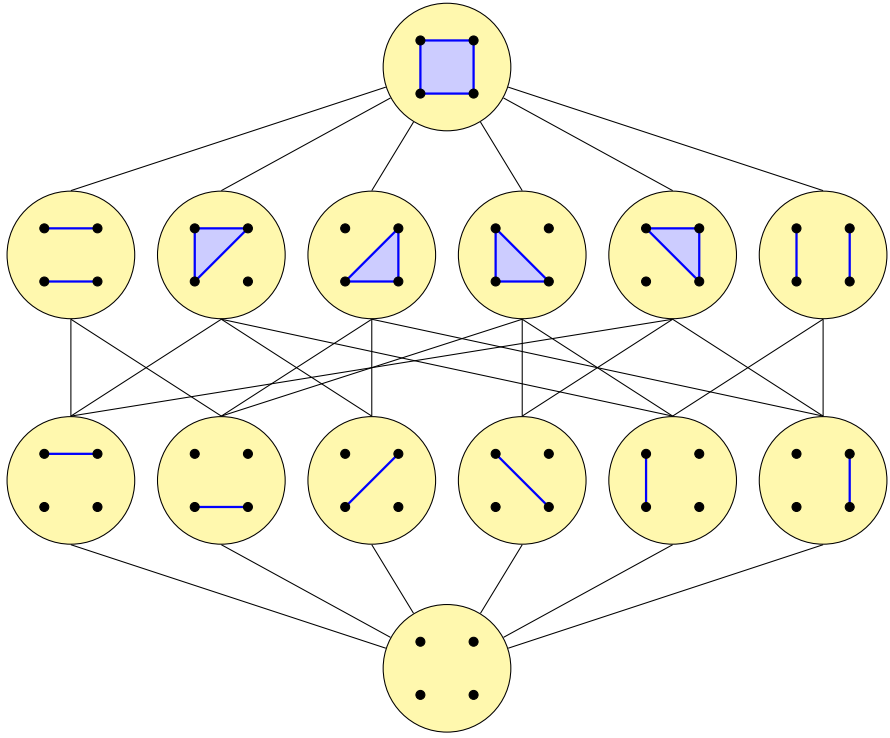

We close this section with an example which belongs to all three of the
configuration types introduced in Definition~\ref{def:three-configurations}.
Let $T_n$ denote a configuration of $n+1$ points $x_1,\ldots,x_n,y$ in $\C$
in which $x_1,\ldots,x_n$ lie in order on a common line and the remaining point $y$ does not. 
To address the recursive structure of $\NC(T_n)$, we will need a technical lemma.

\begin{lem}\label{lem:remove-point}
	Let $P = \{p_1,\ldots,p_n\}$ be a configuration in $\mathbb{C}$ which lies on the boundary
	of a convex polygon in counterclockwise order, and suppose that $p_n$ and $p_1$ are vertices
	of the polygon. 
	\begin{enumerate}
		\item The subposet of $\NC(P)$ consisting of all partitions where
		$p_{n-1}$ and $p_n$ share a block is isomorphic to $\NC(P - \{p_n\})$.
		\item The subposet of $\NC(P)$ consisting of all partitions where
		either $p_{n-1}$ and $p_n$ share a block or $p_n$ is a singleton 
		is isomorphic to the direct product  $\NC(P - \{p_n\}) \times \NC(\{p_{n-1},p_n\})$.
	\end{enumerate}
\end{lem}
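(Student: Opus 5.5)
For part (1), I will use the natural map $\Phi$ that deletes $p_n$. Given $\pi\in\NC(P)$ in which $p_{n-1}$ and $p_n$ share a block $B$, set $\Phi(\pi)$ to be the partition of $P-\{p_n\}$ obtained by replacing $B$ with $B-\{p_n\}$. The inverse $\Psi$ sends $\sigma\in\NC(P-\{p_n\})$ to the partition of $P$ obtained by adding $p_n$ to the block containing $p_{n-1}$. Both maps clearly preserve refinement, since merging blocks commutes with adding or deleting $p_n$ from the block of $p_{n-1}$. They are also mutually inverse on the underlying partitions. So the only real content is that each map preserves the noncrossing property.

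For $\Phi$, removing a point can only shrink a convex hull, so disjointness is preserved.

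For $\Psi$, this is the main obstacle. Let $B$ be the block of $\sigma$ containing $p_{n-1}$, and let $A$ be any other block. I must show that $\conv(B\cup\{p_n\})$ is disjoint from $\conv(A)$. I would argue as follows.
\begin{enumerate}
\item The points lie on the boundary of a convex polygon in cyclic order, and $p_n$, $p_1$ are vertices. Because $\conv(A)$ and $\conv(B)$ are disjoint, standard boundary-order reasoning applies: the points of $A$ lie in a single arc of the boundary cycle between two consecutive (in cyclic order) points of $B$.
\item Since $p_{n-1}\in B$ and $p_n$ immediately follows $p_{n-1}$, the arc of $A$ avoids the boundary segment from $p_{n-1}$ to $p_n$.
\item Choose a line separating $\conv(A)$ from $\conv(B)$, as in the proof of Theorem~\ref{thm:graded}; it can be taken through a side of one hull. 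I then want to show $p_n$ lies on the $B$ side.
\item If the line cuts the boundary curve in two points, these split it into two arcs. The arc on $B$'s side contains all of $B$, and the arc on $A$'s side contains $A$. The cyclic order then forces $p_n$, which lies between $p_{n-1}$ and the start of $A$'s arc, onto $B$'s side, or onto the line itself.
\end{enumerate}

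The degenerate case is when $p_n$ is collinear with points of $A$ on a side of the polygon. There the vertex hypothesis on $p_n$ and $p_1$ should be used: $p_n$ is an endpoint of its side, so it cannot lie strictly between points of $A$ on a segment. Hence adding it to $B$ does not create an intersection. If the separating-line argument gets messy, I would fall back on a direct characterization: for points on a convex boundary, two blocks cross exactly when there are points $a,a'\in A$ and $b,b'\in B$ that interleave cyclically, or when their hulls share a collinear segment. Adding $p_n$ adjacent to $p_{n-1}\in B$ cannot create interleaving. The collinear-overlap case is then excluded because $p_n$ is a corner.

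For part (2), the subposet splits by whether $p_n$ is joined to $p_{n-1}$ or is a singleton. I would define the map to $\NC(P-\{p_n\})\times\NC(\{p_{n-1},p_n\})$ by sending $\pi$ to the pair consisting of (restriction of $\pi$ to $P-\{p_n\}$, partition of $\{p_{n-1},p_n\}$ recording whether they share a block). The inverse takes $(\sigma,\tau)$ to one of two partitions: $\Psi(\sigma)$ if $\tau$ is the one-block partition, or $\sigma$ together with the singleton $\{p_n\}$ otherwise. Adding a singleton preserves noncrossing because a point on the boundary of a convex polygon that is a vertex cannot lie in the hull of other points of $P$. Finally, I would check that the order matches componentwise, which follows from the same reasoning as in part (1): merging blocks among $P-\{p_n\}$ is independent of the status of $p_n$, and joining $p_n$ to $p_{n-1}$ is exactly the order in the second factor.
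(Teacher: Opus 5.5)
Your strategy is the paper's. For (1) the inverse map adds $p_n$ to the block of $p_{n-1}$. For (2) you split according to whether $p_n$ is a singleton, which is the paper's $[\hat{0},\beta]\cup[\alpha,\hat{1}]$. Part (2) is fine once (1) is in place. The paper simply asserts that the partition obtained by adding $p_n$ is noncrossing, citing that $p_n$ and $p_1$ are vertices. Your attempt to justify this step, which is the only step with real content, does not work.

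First, step 4 is false for an arbitrary separating line. The cyclic order puts both $p_n$ and the point where the line meets the boundary in the gap between $p_{n-1}$ and the first point of $A$, but it does not order them. For example, with $A=\{p_1\}$ and $B=\{p_{n-1}\}$, a separating line may cut the boundary between $p_{n-1}$ and $p_n$. So the line has to be constructed, and constructing it is the whole issue.

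Second, and more seriously, your degenerate case and your fallback use only that $p_n$ is a corner. That cannot suffice, because the statement is false when $p_1$ is not a vertex. Take the triangle with vertices $(0,0),(2,0),(0,2)$ and $p_1=(1,0)$, $p_2=(3/2,0)$, $p_3=(0,1)$, $p_4=(0,0)$.
These points are in counterclockwise order and $p_4$ is a corner. The partition $\sigma=\{\{p_1\},\{p_2,p_3\}\}$ is noncrossing. Yet $\Psi(\sigma)=\{\{p_1\},\{p_2,p_3,p_4\}\}$ crosses, because $p_1\in[p_4,p_2]$.
In this example no interleaving is created and $p_4$ does not lie between points of $A$. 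The crossing comes from a point of $A$ lying between $p_n$ and a point of $B$ on the edge \emph{leaving} $p_n$. Ruling this out is exactly what the hypothesis on $p_1$ does, and your argument never uses it.

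A repair along the lines of your fallback goes as follows.

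Replace the polygon by $\conv(P)$. All of $P$ stays on its boundary, and $p_n,p_1$ stay vertices. Now every vertex lies in $P$, so $[p_n,p_1]$ is an edge containing no other point of $P$. The points of $P$ on the other edge at $p_n$ are $p_j,\ldots,p_{n-1},p_n$ in order.

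Use the precise crossing criterion: blocks $A,B$ cross iff they interleave cyclically, or $\conv(A\cap\ell)\cap\conv(B\cap\ell)\neq\emptyset$ for some edge line $\ell$.
\begin{itemize}
\item Single-point overlaps must count here. For instance, $B=\{b\}$ with $b$ between two points of $A$ on an edge neither interleaves nor ``shares a segment''.
\item To prove the criterion, suppose there is no interleaving. Then $A$ lies in an arc with endpoints $a,a'\in A$ that contains no point of $B$.
\item If $a,a'$ are not on a common edge, the line $aa'$ separates the two hulls.
\item Otherwise $A$ or $B$ lies on that edge, or $|A|=1$, and the edge condition applies.
\end{itemize}

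Now check the four cases for $A$ against $B\cup\{p_n\}$.
\begin{itemize}
\item Adding $p_n$ creates no interleaving, as you say.
\item Edge lines avoiding $p_n$ are unaffected.
\item On $[p_n,p_1]$, the block $A$ meets the edge in at most $\{p_1\}$. The block $B\cup\{p_n\}$ meets it in a subset of $\{p_n,p_1\}$ that omits $p_1$ whenever $p_1\in A$.
\item On the edge ending at $p_n$, the interval spanned by $B$ contains $p_{n-1}$. So the interval spanned by $A$ lies strictly before it, and stretching $B$'s interval to $p_n$ keeps the two disjoint.
\end{itemize}
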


\begin{proof}
	For the first claim, define $\alpha = \{\{p_1\},\ldots,\{p_{n-2}\},\{p_{n-1},p_n\}\}$
	and observe that the described subposet is the interval $[\alpha,\hat{1}]$ in $\NC(P)$.
	For each partition $\pi$ in $\NC(P-\{p_n\})$, define $f(\pi)$ simply by adding $p_n$ to
	the block containing $p_{n-1}$. Since $p_n$ and $p_1$ are vertices of the polygon containing $P$ 
	in its boundary, we can see that $f(\pi)$ is also noncrossing, and in particular an element
	of $[\alpha,\hat{1}]$. The fact that $f$ is an order-preserving bijection with order-preserving
	inverse then follows immediately from the definition, so the first claim is proved.
	
	Next, define $\beta = \{\{p_1,\ldots,p_{n-1}\},\{p_n\}\}$ and observe that 
	$\beta \in \NC(P)$ since we assumed that $p_n$ is a vertex of the polygon. Then the
	subposet described in the second claim is the disjoint union of the intervals
	$[\alpha,\hat{1}]$ and $[\hat{0},\beta]$. Since $p_n$ is not in the convex hull of
	$\{p_1,\ldots,p_{n-1}\}$, we know that $[\hat{0},\beta]$ is isomorphic to $\NC(P-\{p_n\})$,
	which we just showed is also isomorphic to $[\alpha,\hat{1}]$.
	Finally, observe that each partition in $[\alpha,\hat{1}]$ covers a unique element
	of $[\hat{0},\beta]$ which is obtained by removing $p_n$ from its block and making it 
	into a singleton. Therefore, the subposet $[\alpha,\hat{1}] \cup [\hat{0},\beta]$ is the direct product of $\NC(P-\{p_n\})$
	and $\NC(\{p_{n-1},p_n\})$.
\end{proof}

\begin{figure}
	\centering
	\begin{tikzpicture}[scale=0.96]
        \node [shape=circle,draw,fill=yellow!40,inner sep = 0.35cm] (pa) at (-7.15+1.1*8,0) {};
        
        \foreach \i in {3,6,7,8,9,10} {
            \node [shape=circle,draw,fill=yellow!40,inner sep = 0.35cm] (pb\i) at (-7.15+1.1*\i,-2) {};
        }
        \node [shape=circle,draw,fill=yellow!40,inner sep = 0.35cm] (pbb) at (-7.15+1.1*11.5,-2) {};

        \foreach \i in {1,...,12} {
            \node [shape=circle,draw,fill=yellow!40,inner sep = 0.35cm] (pc\i) at (-7.15+1.1*\i,-4) {};
        }
        
        \foreach \i in {1,2,3,4,5,8} {
            \node [shape=circle,draw,fill=yellow!40,inner sep = 0.35cm] (pd\i) at (-7.15+1.1*\i,-6) {};
        }
        \node [shape=circle,draw,fill=yellow!40,inner sep = 0.35cm] (pdd) at (-7.15+1.1*11.5,-6) {};
        
       	\node [shape=circle,draw,fill=yellow!40,inner sep = 0.35cm] (pe) at (-7.15+1.1*3,-8) {};
        
        
        \draw[GrayLine, thin] (pa.south) -- (pbb.north);
        \draw[GrayLine, thin] (pbb.south) -- (pc3.north);
        \draw[GrayLine, thin] (pb9.south) -- (pc11.north);
        \draw[GrayLine, thin] (pb10.south) -- (pc12.north);
        \draw[GrayLine, thin] (pc11.south) -- (pd2.north);
        \draw[GrayLine, thin] (pc12.south) -- (pd3.north);
        \draw[GrayLine, thin] (pc10.south) -- (pdd.north);
        \draw[GrayLine, thin] (pdd.south) -- (pe.north);

        \foreach \i in {6,7,8,9,10} {
        	\draw[RedLine, thick] (pa.south) -- (pb\i.north);
        	\draw[RedLine, thick] (pc\i.south) -- (pd8.north);
        	
        }
        
        \begin{scope}[thick, line cap = round, join = round]
        \draw[RedLine, thick] (pb6.south) edge (pc6.north) edge (pc7.north) edge (pc8.north);
        \draw[RedLine, thick] (pb7.south) edge (pc6.north) edge (pc9.north);
        \draw[RedLine, thick] (pb8.south) edge (pc7.north) edge (pc9.north);
        \draw[RedLine, thick] (pb9.south) edge (pc7.north) edge (pc10.north);
        \draw[RedLine, thick] (pb10.south) edge (pc8.north) edge (pc9.north) edge (pc10.north);
        
        \draw[PurpleLine, thick, line cap = round] (pa.south) -- (pb3.north);
        \draw[PurpleLine, thick] (pb6.south) -- (pc1.north);
        \draw[PurpleLine, thick] (pb7.south) -- (pc2.north);
        \draw[PurpleLine, thick] (pb8.south) -- (pc3.north);
        \draw[PurpleLine, thick] (pb9.south) -- (pc4.north);
        \draw[PurpleLine, thick] (pb10.south) -- (pc5.north);
        \draw[PurpleLine, thick] (pc6.south) -- (pd1.north);
        \draw[PurpleLine, thick] (pc7.south) -- (pd2.north);
        \draw[PurpleLine, thick] (pc8.south) -- (pd3.north);
        \draw[PurpleLine, thick] (pc9.south) -- (pd4.north);
        \draw[PurpleLine, thick] (pc10.south) -- (pd5.north);
        \draw[PurpleLine, thick] (pd8.south) -- (pe.north);
        
        \draw[GreenLine, thick] (pc11.north) -- (pbb.south) -- (pc12.north);
        \draw[GreenLine, thick] (pc11.south) -- (pdd.north) -- (pc12.south);
        
		\foreach \i in {1,2,3,4,5} {
        	\draw[RedLine, thick] (pb3.south) -- (pc\i.north);
        	\draw[RedLine, thick] (pd\i.south) -- (pe.north);
        }
        
        \draw[RedLine, thick] (pc1.south) edge (pd1.north) edge (pd2.north) edge (pd3.north);
        \draw[RedLine, thick] (pc2.south) edge (pd1.north) edge (pd4.north);
        \draw[RedLine, thick] (pc3.south) edge (pd2.north) edge (pd4.north);
        \draw[RedLine, thick] (pc4.south) edge (pd2.north) edge (pd5.north);
        \draw[RedLine, thick] (pc5.south) edge (pd3.north) edge (pd4.north) edge (pd5.north);
        \end{scope}
        
        \begin{scope}[shift={(pa)}]
            \maketpoints
            \filldraw[BluePoly, join = bevel] (0) -- (1) -- (2) -- (3) -- (4) -- cycle;
            \drawtpoints
        \end{scope}
        
        \begin{scope}[shift={(pb3)}]
        	\maketpoints
            \filldraw[BluePoly] (0) -- (1) -- (2) -- (3) -- cycle;
            \drawtpoints
        \end{scope}
        
        \begin{scope}[shift={(pb6)}]
        	\maketpoints
            \filldraw[BluePoly, join = bevel] (0) -- (1) -- (2) -- cycle;
            \draw[BlueLine] (3) -- (4);
            \drawtpoints
        \end{scope}
        
        \begin{scope}[shift={(pb7)}]
        	\maketpoints
            \draw[BlueLine] (0) -- (1);
            \draw[BlueLine] (2) -- (3) -- (4);
            \drawtpoints
        \end{scope}
        
        \begin{scope}[shift={(pb8)}]
        	\maketpoints
            \draw[BlueLine] (1) -- (2) -- (3) -- (4);
            \drawtpoints
        \end{scope}
        
        \begin{scope}[shift={(pb9)}]
        	\maketpoints
            \filldraw[BluePoly, join = bevel] (0) -- (3) -- (4) -- cycle;
            \draw[BlueLine] (1) -- (2);
            \drawtpoints
        \end{scope}
        
        \begin{scope}[shift={(pb10)}]
        	\maketpoints
            \filldraw[BluePoly, join = bevel] (0) -- (2) -- (3) -- (4) -- cycle;
            \drawtpoints
        \end{scope}
        
        \begin{scope}[shift={(pbb)}]
        	\maketpoints
            \draw[BlueLine] (0) -- (4);
            \draw[BlueLine] (1) -- (2) -- (3);
            \drawtpoints
        \end{scope}

		
		\begin{scope}[shift={(pc1)}]
        	\maketpoints
            \filldraw[BluePoly, join=bevel] (0) -- (1) -- (2) -- cycle;
            \drawtpoints
        \end{scope}
        
        \begin{scope}[shift={(pc2)}]
        	\maketpoints
            \draw[BlueLine] (0) -- (1);
            \draw[BlueLine] (2) -- (3);
            \drawtpoints
        \end{scope}
        
        \begin{scope}[shift={(pc3)}]
        	\maketpoints
            \draw[BlueLine] (1) -- (2) -- (3);
            \drawtpoints
        \end{scope}
        
        \begin{scope}[shift={(pc4)}]
        	\maketpoints
            \draw[BlueLine] (0) -- (3);
            \draw[BlueLine] (1) -- (2);
            \drawtpoints
        \end{scope}
        
        \begin{scope}[shift={(pc5)}]
        	\maketpoints
            \filldraw[BluePoly, join=bevel] (0) -- (2) -- (3) -- cycle;
            \drawtpoints
        \end{scope}
        
        \begin{scope}[shift={(pc6)}]
        	\maketpoints
            \draw[BlueLine] (0) -- (1);
            \draw[BlueLine] (3) -- (4);
            \drawtpoints
        \end{scope}
        
        \begin{scope}[shift={(pc7)}]
        	\maketpoints
            \draw[BlueLine] (1) -- (2);
            \draw[BlueLine] (3) -- (4);
            \drawtpoints
        \end{scope}
        
        \begin{scope}[shift={(pc8)}]
        	\maketpoints
            \draw[BlueLine] (0) -- (2);
            \draw[BlueLine] (3) -- (4);
            \drawtpoints
        \end{scope}
        
        \begin{scope}[shift={(pc9)}]
        	\maketpoints
            \draw[BlueLine] (2) -- (3) -- (4);
            \drawtpoints
        \end{scope}
        
        \begin{scope}[shift={(pc10)}]
        	\maketpoints
            \filldraw[BluePoly, join=bevel] (0) -- (3) -- (4) -- cycle;
            \drawtpoints
        \end{scope}
        
        \begin{scope}[shift={(pc11)}]
        	\maketpoints
            \draw[BlueLine] (0) -- (4);
            \draw[BlueLine] (1) -- (2);
            \drawtpoints
        \end{scope}
        
        \begin{scope}[shift={(pc12)}]
        	\maketpoints
            \draw[BlueLine] (0) -- (4);
            \draw[BlueLine] (2) -- (3);
            \drawtpoints
        \end{scope}
        
        \begin{scope}[shift={(pd1)}]
        	\maketpoints
            \draw[BlueLine] (0) -- (1);
            \drawtpoints
        \end{scope}
        
        \begin{scope}[shift={(pd2)}]
        	\maketpoints
            \draw[BlueLine] (1) -- (2);
            \drawtpoints
        \end{scope}
        
        \begin{scope}[shift={(pd3)}]
        	\maketpoints
            \draw[BlueLine] (0) -- (2);
            \drawtpoints
        \end{scope}
        
        \begin{scope}[shift={(pd4)}]
        	\maketpoints
            \draw[BlueLine] (2) -- (3);
            \drawtpoints
        \end{scope}
        
        \begin{scope}[shift={(pd5)}]
        	\maketpoints
            \draw[BlueLine] (0) -- (3);
            \drawtpoints
        \end{scope}
        
        \begin{scope}[shift={(pd8)}]
        	\maketpoints
            \draw[BlueLine] (3) -- (4);
            \drawtpoints
        \end{scope}
        
        \begin{scope}[shift={(pdd)}]
        	\maketpoints
            \draw[BlueLine] (0) -- (4);
            \drawtpoints
        \end{scope}
        
        \begin{scope}[shift={(pe)}]
        	\maketpoints
            \drawtpoints
        \end{scope}
       
    \end{tikzpicture}
    \caption{The noncrossing partition lattice $\NC(T_4)$. The edges have been colored
    so that the red edges and purple edges illustrate the subposet $A$
    defined in Lemma~\ref{lem:t-partition-recursive} and the green edges illustrate
    the subposet $B$.}
    \label{fig:ncp-t}
\end{figure}
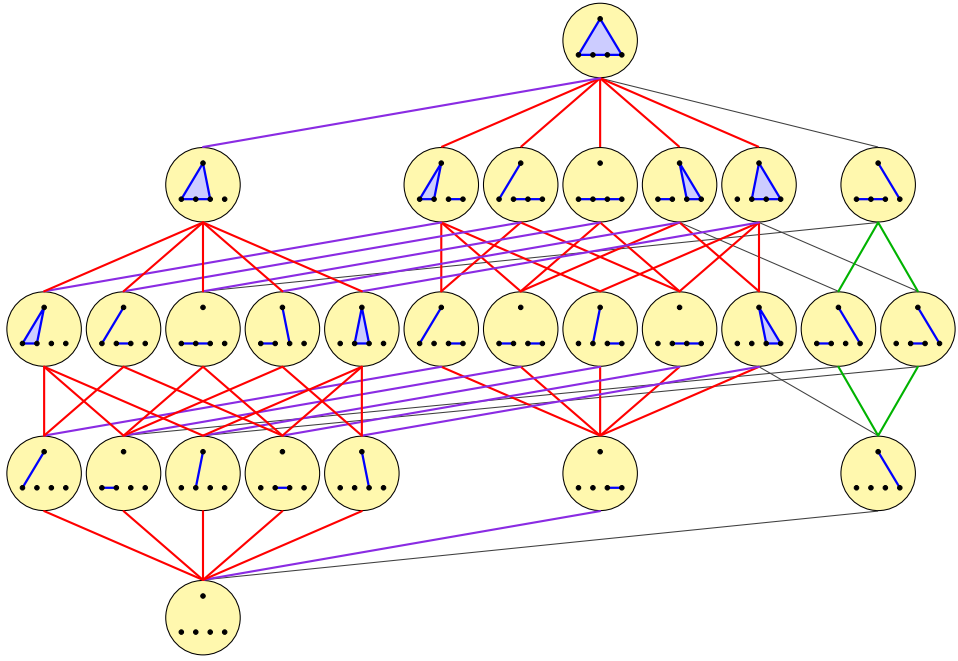

We are now ready to identify some recursive structure for $\NC(T_n)$. See
Figure~\ref{fig:ncp-t} for an illustration.

\begin{lem}\label{lem:t-partition-recursive}
	Let $n\geq 2$, let $A$ be the subposet of $\NC(T_n)$ which consists of all partitions in which
	$x_n$ is either a singleton or in a block with $x_{n-1}$, and let $B$ be the subposet
	of $\NC(T_n)$ consisting of all partitions in which $\{x_n,y\}$ is a block. Then
	\begin{enumerate}
		\item $A$ is isomorphic to $\NC(T_{n-1}) \times \bool(1)$;
		\item $B$ is isomorphic to $\bool(n-2)$.
	\end{enumerate}
	Moreover, $\NC(T_n)$ is the disjoint union of $A$ and $B$.
\end{lem}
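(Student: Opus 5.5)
For part (1) I would apply Lemma~\ref{lem:remove-point}(2) to $T_n$. Label the points counterclockwise along the boundary of the triangle $\conv(T_n)$ as $p_1 = y$, $p_2 = x_1, \ldots, p_{n+1} = x_n$, choosing the orientation of the line so that this order is counterclockwise. Then $p_{n+1} = x_n$ and $p_1 = y$ are vertices of the triangle, and $p_n = x_{n-1}$ is the neighbor of $x_n$. The lemma identifies the subposet where $x_n$ shares a block with $x_{n-1}$ or is a singleton, which is exactly $A$, with $\NC(T_n - \{x_n\}) \times \NC(\{x_{n-1},x_n\})$. Since $T_n - \{x_n\}$ is a copy of $T_{n-1}$ and two points form $\bool(1)$ by Example~\ref{ex:boolean}, this gives $A \cong \NC(T_{n-1}) \times \bool(1)$.

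For part (2), take $\pi \in B$, so $\{x_n, y\}$ is a block of $\pi$. The segment from $x_n$ to $y$ separates nothing among $x_1,\ldots,x_{n-1}$: all of them lie on the same side of it, in the closed triangle. Hence any block among the remaining points avoids this segment exactly when it avoids the segment as a subset of the line. Concretely, a block containing some $x_i$ with $i<n$ has convex hull on the line segment $[x_1,x_{n-1}]$, which is disjoint from $\conv\{x_n,y\}$. The remaining blocks therefore form an arbitrary element of $\NC(\{x_1,\ldots,x_{n-1}\}) \cong \bool(n-2)$ by Example~\ref{ex:boolean}. The map $\pi \mapsto \pi|_{\{x_1,\ldots,x_{n-1}\}}$ is then a bijection with inverse "adjoin the block $\{x_n,y\}$." Both directions clearly preserve refinement, so $B \cong \bool(n-2)$.

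For the disjoint union, note first that $A$ and $B$ are disjoint, since in $B$ the point $x_n$ is neither a singleton nor together with $x_{n-1}$ (it has $y$ instead). For coverage, let $\pi \in \NC(T_n)$ with $x_n$ not a singleton and not in the block of $x_{n-1}$, and let $X$ be the block of $x_n$. If $X$ contained some $x_i$ with $i<n-1$, then $\conv(X)$ would contain the segment $[x_i,x_n]$, which contains $x_{n-1}$, forcing $x_{n-1} \in X$ or a crossing. So $X \subseteq \{x_n, y\}$, and since $X \neq \{x_n\}$ we get $X = \{x_n,y\}$, i.e.\ $\pi \in B$.

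The main obstacle is only the bookkeeping in part (1): checking that the hypotheses of Lemma~\ref{lem:remove-point} are met with a correct counterclockwise labeling. If the line ordering runs the wrong way, I would reflect the configuration, which does not change $\NC(T_n)$.
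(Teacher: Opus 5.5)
Your proposal is correct and follows essentially the same route as the paper. Part (1) comes from Lemma~\ref{lem:remove-point}; part (2) holds because the remaining blocks lie in $\{x_1,\ldots,x_{n-1}\}$, whose hull misses $\conv(\{x_n,y\})$; and the decomposition follows from the three possible positions of $x_n$. You add detail the paper leaves implicit, namely the explicit counterclockwise labeling that verifies the hypotheses of Lemma~\ref{lem:remove-point} and the convexity argument ruling out a block containing $x_n$ and some $x_i$ with $i<n-1$.
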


\begin{proof}
	The first claim follows from Lemma~\ref{lem:remove-point}. The second claim is 
	straightforward since the convex hull of $\{x_n,y\}$ is disjoint from the
	convex hull of $\{x_1,\ldots,x_{n-1}\}$, and $\NC(\{x_1,\ldots,x_{n-1}\})$ is isomorphic
	to $\bool(n-2)$. For the final claim, note that the 
	arrangement of the configuration $T_n$ tells us that $x_n$
	must either be in a block by itself, be in a block with $y$ and nothing else, or 
	share a block with $x_{n-1}$ (and possibly other points). So the claim that 
	$\NC(T_n)$ is the disjoint union of $A$ and $B$ is immediate and the proof is complete.
\end{proof}

Next, we can address the presence of a symmetric chain decomposition. The following observation
will be useful at several points in the rest of the article.

\begin{rem}\label{rem:scd}
	Suppose $\NC(P)$ is graded with rank function $\rho$, and suppose that $\NC(P)$ is the disjoint
	union of subposets $A_1,\ldots,A_k$ such that each $A_i$ is a bounded poset with minimum element
	$\hat{0}_i$ and maximum element $\hat{1}_i$. If 
	\begin{enumerate}
		\item each $A_i$ is a union of intervals in $\NC(P)$,
		\item each $A_i$ admits a symmetric chain decomposition, and
		\item $\rho(\hat{0}) + \rho(\hat{1}) = \rho(\hat{0}_i) + \rho(\hat{1}_i)$ for all 
		$i\in\{1,\ldots,k\}$,
	\end{enumerate}
	then $\NC(P)$ is a union of centered subposets with symmetric chain decompositions which
	can be combined to form a symmetric chain decomposition for $\NC(P)$.
\end{rem}

\begin{prop}\label{prop:t-scd}
	$\NC(T_n)$ admits a symmetric chain decomposition.
\end{prop}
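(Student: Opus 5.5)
We argue by induction on $n$, using the decomposition of Lemma~\ref{lem:t-partition-recursive} together with Remark~\ref{rem:scd}. For the base cases: $T_1$ consists of two points, so $\NC(T_1)\cong\bool(1)$, which is a chain of length one. $T_0$ is a single point, giving a one-element poset. Both trivially admit symmetric chain decompositions. (Alternatively, $T_2$ is three points in convex position, so $\NC(T_2)\cong\NC(3)$, which has a symmetric chain decomposition by \cite{simion-ullman91}.)

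For $n\geq 2$, Lemma~\ref{lem:t-partition-recursive} writes $\NC(T_n)$ as the disjoint union of $A\cong \NC(T_{n-1})\times\bool(1)$ and $B\cong\bool(n-2)$, and we check the three hypotheses of Remark~\ref{rem:scd}.

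\emph{Condition (1).} From the proof of Lemma~\ref{lem:remove-point}, $A=[\hat 0,\beta]\cup[\alpha,\hat 1]$, a union of two intervals. Here we apply that lemma with $p_n=x_n$; in counterclockwise order $x_n$ is adjacent to $y$ and $x_{n-1}$, and both $x_n$ and $y$ are vertices of the convex hull. The set $B$ is exactly the interval $[\gamma,\delta]$, where $\gamma$ is the partition whose only nonsingleton block is $\{x_n,y\}$, and $\delta=\{\{x_1,\dots,x_{n-1}\},\{x_n,y\}\}$.

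\emph{Condition (2).} $B\cong\bool(n-2)$ admits a symmetric chain decomposition by \cite{de-bruijn}. By induction, $\NC(T_{n-1})$ has a symmetric chain decomposition. The product of a poset having a symmetric chain decomposition with a two-element chain again has one: each chain $c_0<\dots<c_k$ in the product with $\{0<1\}$ forms a $(k+1)\times 2$ grid, which splits into the two centered chains
\[
(c_0,0)<\dots<(c_k,0)<(c_k,1)
\quad\text{and}\quad
(c_0,1)<\dots<(c_{k-1},1).
\]
This is the standard de Bruijn argument.

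One also needs the chains to be saturated in $\NC(T_n)$ itself, not just in $A$ or $B$. Since $\NC(T_n)$ is graded by Theorem~\ref{thm:graded}, and the isomorphisms are rank-shifting restrictions of the rank function $\rho$ (covers within intervals are covers in $\NC(T_n)$), this holds.

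\emph{Condition (3).} Here $\rho(\hat 0)+\rho(\hat 1)=0+n$, since $T_n$ has $n+1$ points. For $A$, the minimum is $\hat 0$ and the maximum is $\hat 1$, so the sum is $n$. For $B$, $\rho(\gamma)=1$ and $\rho(\delta)=(n+1)-2=n-1$, so the sum is again $n$.

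Remark~\ref{rem:scd} then assembles these into a symmetric chain decomposition of $\NC(T_n)$. The main obstacle is the bookkeeping that the chains from $A$, which is a union of two intervals rather than a single one, remain saturated and centered with respect to the global rank function $\rho$. This reduces to checking that the identification $A\cong\NC(T_{n-1})\times\bool(1)$ respects ranks: an element $(\pi,\epsilon)$ has rank $\rho_{T_{n-1}}(\pi)+\epsilon$. That holds because adding $x_n$ to the block of $x_{n-1}$ raises the rank by exactly one.
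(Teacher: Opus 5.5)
Your proof is correct and follows the paper's argument. You use the base cases $T_0$ and $T_1$, split $\NC(T_n)$ into $A\cong\NC(T_{n-1})\times\bool(1)$ and $B\cong\bool(n-2)$ via Lemma~\ref{lem:t-partition-recursive}, and combine de Bruijn's product result with Remark~\ref{rem:scd}. The only difference is that you verify the remark's hypotheses explicitly, which the paper leaves implicit: the interval descriptions of $A$ and $B$, the rank sums $0+n$ and $1+(n-1)$, and saturation with respect to the global rank function.
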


\begin{proof}
	First, we know from Theorem~\ref{thm:graded} that $\NC(T_n)$ is graded.
	As base cases, note that $\NC(T_0)$ and $\NC(T_1)$ consist of only one element and two elements
	respectively, and they trivially have symmetric chain decompositions as they each 
	consist of a single chain. Proceeding by induction, let $n\geq 2$ and suppose that 
	$\NC(T_{n-1})$ has a symmetric chain decomposition; in particular, this means that 
	$\NC(T_{n-1})$ is rank-symmetric. We know by 
	Lemma~\ref{lem:t-partition-recursive} that $\NC(T_n)$ is the disjoint union of
	subposets isomorphic to $\NC(T_{n-1})\times \bool(1)$ and $\bool(n-2)$. Products of posets
	with symmetric chain decompositions have symmetric chain decompositions \cite{de-bruijn},
	so the subposets $A$ and $B$ meet the criteria for Remark~\ref{rem:scd}. Therefore, $\NC(T_n)$ 
	has a symmetric chain decomposition as well, and we are done.
\end{proof}

\begin{prop}\label{prop:t-gen-func}
	If $t_n = |\NC(T_n)|$, then the associated generating function is
	\[
		T(x) = \sum_{n\geq 0} t_n x^n = \frac{(1-x)^2}{(1-2x)^2}.
	\]
\end{prop}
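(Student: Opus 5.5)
The plan is to turn the decomposition in Lemma~\ref{lem:t-partition-recursive} into a linear recurrence for $t_n$ and then solve that recurrence with generating functions.

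\textbf{Recurrence.} For $n\geq 2$, Lemma~\ref{lem:t-partition-recursive} says that $\NC(T_n)$ is the disjoint union of $A\cong \NC(T_{n-1})\times\bool(1)$ and $B\cong\bool(n-2)$. Taking cardinalities gives
\[
	t_n = 2t_{n-1} + 2^{n-2} \qquad (n\geq 2).
\]
The initial values follow from the base cases already noted in the proof of Proposition~\ref{prop:t-scd}: $t_0 = 1$ and $t_1 = 2$. As sanity checks, $T_2$ is three non-collinear points, so $t_2 = |\NC(3)| = 5 = 2\cdot 2+1$. Likewise the recurrence gives $t_3=12$ and $t_4=28$, and $28$ agrees with the rank sizes $1+7+12+7+1$ visible in Figure~\ref{fig:ncp-t}.

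\textbf{Generating function.} Multiply the recurrence by $x^n$ and sum over $n\geq 2$. The left side is $T(x)-1-2x$. The first term on the right is $2x\,(T(x)-1)$. The second term is the geometric series $\sum_{n\geq 2}2^{n-2}x^n = x^2/(1-2x)$. Collecting terms gives
\[
	T(x)(1-2x) = 1 + \frac{x^2}{1-2x},
\]
and therefore
\[
	T(x) = \frac{1-2x+x^2}{(1-2x)^2} = \frac{(1-x)^2}{(1-2x)^2}.
\]

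\textbf{Main obstacle.} The only delicate point is the index bookkeeping. The recurrence holds only for $n\geq 2$, so the initial terms $t_0$ and $t_1$ must be subtracted correctly, and the $\bool(n-2)$ term contributes $2^{n-2}$ elements starting exactly at $n=2$. Beyond that, the argument is routine algebra.
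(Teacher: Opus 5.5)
Your proposal is correct and matches the paper's proof. It uses the same recurrence $t_n = 2t_{n-1} + 2^{n-2}$ from Lemma~\ref{lem:t-partition-recursive} with $t_0=1$, $t_1=2$, and the same generating-function manipulation to reach $T(x)(1-2x) = 1 + x^2/(1-2x)$.
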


\begin{proof}
	When $n\geq 2$, we know that $t_n$ satisfies the recursive formula
	\[
		t_n = 2t_{n-1} + 2^{n-2}
	\]	
	by Lemma~\ref{lem:t-partition-recursive},
	and the sequence begins with the terms $1, 2, 5, 12, 28, 64,\ldots$ for $n\geq 0$. 
	Therefore, we have 
	\begin{align*}
		T(x) &= \sum_{n\geq 0} t_n x^n \\
		&= 1 + 2x + \sum_{n\geq 2} t_n x^n \\
		&= 1 + 2x + 2 \sum_{n\geq 2}	t_{n-1} x^n + \sum_{n\geq 2} 2^{n-2} x^n \\
		&= 1 + 2x + 2x \sum_{n\geq 1} t_n x^n + x^2 \sum_{n\geq 0} 2^n x^n \\
		&= 1 + 2x + 2x (T(x) - 1) + x^2 \frac{1}{1-2x} \\
		T(x)(1-2x) &= 1+x^2\frac{1}{1-2x} \\
		T(x)(1-2x) &= \frac{1-2x+x^2}{1-2x} \\
		T(x) &= \frac{(1-x)^2}{(1-2x)^2}
	\end{align*}
	and we are done.
\end{proof}

As a final note, we observe that there is a closed form for the recursive 
formula given in the proof of Proposition~\ref{prop:t-gen-func}: 
$t_n = (n+3)2^{n-2}$ when $n \geq 2$. To see this, observe as a base case 
that $t_2 = 5$, then assume that for some $n \geq 2$, the formula works for $t_n$; 
then we have
\begin{align*}
	t_{n+1} &= 2t_n + 2^{n-1} \\
	&= 2(n+3)2^{n-2} + 2^{n-1} \\
	&= (n+3)2^{n-1} + 2^{n-1} \\
	&= (n+4)2^{n-1}
\end{align*}
as desired. 

\section{Cone configurations}\label{sec:cone-config}

Recall the definitions of open cone configurations $U_{m,n}$ and closed cone 
configurations $V_{m,n}$ from Definition~\ref{def:three-configurations}.
In this section we will show that the corresponding lattices $\NC(U_{m,n})$ and $\NC(V_{m,n})$
admit symmetric chain decompositions and provide multivariate generating functions which 
count the number of elements in each.
For the remainder of this section, assume that $U_{m,n}$ consists of $m$ points 
$x_1,\ldots,x_m$ in order on the positive real axis and $n$ points $y_1,\ldots,y_n$ 
in order on the positive imaginary axis in $\mathbb{C}$, and let $V_{m,n}$ be the same 
configuration with the additional point $z$ at the origin.

\begin{lem}\label{lem:u-partition-recursive}
	Let $m \geq 2$ and $n \geq 1$. Define $A$ to be the subposet of $\NC(U_{m,n})$
	which consists of all partitions in which $x_m$ is either a singleton or in a
	block with $x_{m-1}$. For each $k \in \{1,\ldots,n\}$, define $B_k$ to be the 
	subposet of $\NC(U_{m,n})$ consisting of all partitions in which $x_m$ shares a block
	with $y_k$ but not $x_{m-1},y_1,\ldots,y_{k-1}$. Then
	\begin{enumerate}
		\item $A$ is isomorphic to $\NC(U_{m-1,n}) \times \bool(1)$;
		\item $B_k$ is isomorphic to $\NC(U_{m-1,k-1}) \times \bool(n-k)$
	\end{enumerate}
	Moreover, $\NC(U_{m,n})$ is the disjoint union of $A,B_1,\ldots,B_n$.
\end{lem}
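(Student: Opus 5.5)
The plan follows the proof of Lemma~\ref{lem:t-partition-recursive}: prove the two isomorphisms and then the disjoint-union claim.

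\textbf{The subposet $A$.} Order $U_{m,n}$ counterclockwise as $y_n,\ldots,y_1,x_1,\ldots,x_m$. Then $x_m$ and $y_n$ are vertices of the convex hull, and $x_{m-1}$ precedes $x_m$. Part~(2) of Lemma~\ref{lem:remove-point} identifies $A$ with $\NC(U_{m-1,n})\times\NC(\{x_{m-1},x_m\})$. The second factor is $\bool(1)$, which gives claim~(1).

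\textbf{The subposet $B_k$.} Let $\pi\in B_k$ and let $R$ be the block containing $x_m$ and $y_k$. The segment $[x_m,y_k]$ cuts the cone into two regions.
\begin{itemize}
\item The triangle with vertices at the origin, $x_m$ and $y_k$ contains $x_1,\ldots,x_{m-1}$ and $y_1,\ldots,y_{k-1}$ on its boundary.
\item The outer region contains $y_{k+1},\ldots,y_n$.
\end{itemize}
No other block can contain points from both sides, since its hull would cross $\conv(R)$.

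By hypothesis $R$ contains none of $x_{m-1},y_1,\ldots,y_{k-1}$. I claim it contains no $x_i$ with $i<m-1$ either. If it did, $x_{m-1}$ would lie on the segment $[x_i,x_m]\subseteq\conv(R)$. So on the inner side $R$ contains only $x_m$ and $y_k$.

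$R$ may also contain some of $y_{k+1},\ldots,y_n$. Those points, together with $y_k$, are collinear on the imaginary axis. Adjoining $x_m$ to the block containing $y_k$ never creates crossings with the outer blocks, because $x_m$ is a hull vertex and everything outer lies on the far side of the line through $x_m$ and $y_k$; this is the same reasoning as in Lemma~\ref{lem:remove-point}. Hence the outer part of $\pi$ is an arbitrary element of $\NC(\{y_k,\ldots,y_n\})\cong\bool(n-k)$.

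The inner part is a noncrossing partition of $\{x_1,\ldots,x_{m-1},y_1,\ldots,y_{k-1}\}$, which is a copy of $U_{m-1,k-1}$. Every such partition is allowed: its hulls lie in a smaller triangle, away from $[x_m,y_k]$ and from the outer blocks.

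Define $f\colon B_k\to\NC(U_{m-1,k-1})\times\bool(n-k)$ by restricting $\pi$ to the inner points and, separately, to $\{y_k,\ldots,y_n\}$. This is a bijection, and since refinement is checked blockwise it is an order-isomorphism. This gives claim~(2).

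\textbf{Disjoint union.} Look at the block $R$ of $x_m$ and split into cases.
\begin{itemize}
\item If $R$ contains no $y_j$, then $R\subseteq\{x_1,\ldots,x_m\}$. Collinearity forces $R$ to be $\{x_m\}$ or to contain $x_{m-1}$, so $\pi\in A$.
\item If $R$ contains some $y_j$, let $k$ be the least such index. If $x_{m-1}\in R$, then $\pi\in A$. Otherwise $\pi\in B_k$ by definition.
\end{itemize}
The subposets are pairwise disjoint. Different $B_k$ have different least index $k$. A partition in $B_k$ has $x_m$ together with $y_k$ but not $x_{m-1}$, so it is not in $A$.

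The main obstacle is the geometric claim in $B_k$ that $R$ has no other inner points and that the two sides decouple. It needs care because points on the rays lie on the cone's boundary rather than strictly inside, so I would phrase the argument through the separating segment $[x_m,y_k]$.
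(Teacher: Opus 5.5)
Your proof is correct and follows the paper's route. You use part~(2) of Lemma~\ref{lem:remove-point} for $A$, split each $B_k$ into the inner copy of $U_{m-1,k-1}$ and the collinear piece $\{y_k,\ldots,y_n\}$, and settle the disjoint union by a case analysis on the block of $x_m$; you spell out the separation along $[x_m,y_k]$ (including why that block contains no $x_i$ with $i<m-1$), which the paper compresses into a bare citation, and your one loosely justified step (adjoining $x_m$ creates no crossings with the other outer blocks) is exactly part~(1) of Lemma~\ref{lem:remove-point} applied to $\{y_n,\ldots,y_k,x_m\}$.
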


\begin{proof}
	The first claim follows from Lemma~\ref{lem:remove-point}. For the second
	claim, we know by Lemma~\ref{lem:remove-point} that $B_k$ is isomorphic to
	$\NC(U_{m-1,k-1}) \times \NC(U_{0,n-k+1})$, and we know that the second factor
	is isomorphic to $\bool(n-k)$. Finally, we know that in each partition of $U_{m,n}$, 
	the point $x_m$ must either be a singleton or share a block with $x_{m-1}$ or one of
	$y_1,\ldots,y_n$, so $\NC(U_{m,n})$ is the union of $A,B_1,\ldots,B_n$, and it
	is clear from the definition that these are disjoint.
\end{proof}

Note that one may replace $U_{m,n}$ in the proof above with $V_{m,n}$ to obtain
an identical result, as the additional point $z$ does not require any extra posets
for the decomposition. Thus, we can use Lemma~\ref{lem:u-partition-recursive}
for both $\NC(U_{m,n})$ and $\NC(V_{m,n})$. As a first application, we use this recursive
structure to show that both lattices have symmetric chain decompositions.

\begin{thm}\label{thm:uv-scd}
	$\NC(U_{m,n})$ and $\NC(V_{m,n})$ admit symmetric chain decompositions.	
\end{thm}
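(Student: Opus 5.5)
We argue by induction on $m+n$, using Lemma~\ref{lem:u-partition-recursive} together with Remark~\ref{rem:scd}, exactly as in the proof of Proposition~\ref{prop:t-scd}.

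\emph{Gradedness and base cases.} Both configurations lie on the boundary of a convex polygon (truncate the cone), so Theorem~\ref{thm:graded} shows that $\NC(U_{m,n})$ and $\NC(V_{m,n})$ are graded. For the base cases:
\begin{itemize}
\item If $n=0$ (or symmetrically $m=0$), then $U_{m,0}$ and $V_{m,0}$ are collinear, so the lattices are Boolean and Example~\ref{ex:boolean} applies.
\item If $m=1$, the roles of the two rays can be swapped, so we may assume $m\geq 2$ unless $m=n=1$.
\item For $m=n=1$, $U_{1,1}$ gives a two-element chain, and $V_{1,1}$ is three convex points, whose lattice $\NC(3)$ has a known symmetric chain decomposition.
\end{itemize}
Alternatively, we can note that $V_{m,n}$ with $m=1$ or $n=1$, and $U_{1,n}$, are instances of $T$-type configurations, which Proposition~\ref{prop:t-scd} handles.

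\emph{Inductive step.} Let $m\geq 2$ and $n\geq 1$. By Lemma~\ref{lem:u-partition-recursive} and the remark after it, $\NC(U_{m,n})$ is the disjoint union of
\[A\cong \NC(U_{m-1,n})\times\bool(1) \quad\text{and}\quad B_k\cong \NC(U_{m-1,k-1})\times\bool(n-k),\qquad 1\le k\le n,\]
and the same holds for $V$ with $V$ replacing $U$ in the first factors. Products of posets with symmetric chain decompositions again have them \cite{de-bruijn}. So by induction each piece admits a symmetric chain decomposition, and it remains to check conditions (1) and (3) of Remark~\ref{rem:scd}.

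\emph{Condition (1).} $A$ is a union of two intervals, as shown in the proof of Lemma~\ref{lem:remove-point}. Each $B_k$ is an interval: its minimum is the partition whose only nonsingleton block is $\{x_m,y_k\}$, and its maximum is
\[\bigl\{\{x_1,\dots,x_{m-1},y_1,\dots,y_{k-1}\},\ \{x_m,y_k,\dots,y_n\}\bigr\}\]
in the $U$ case. In the $V$ case, $z$ joins the first block if $k\ge 2$; when $k=1$, the point $z$ lies in the hull of $\{x_m,y_1\}$-type blocks only if those points are collinear with it, which they are not, so $z$ goes with the $x$'s. Since both $B_k$ and this candidate interval are characterized by $x_m$ sharing a block with $y_k$ but not with $x_{m-1},y_1,\dots,y_{k-1}$, and this property is closed under the order between these endpoints, $B_k$ is exactly that interval.

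\emph{Condition (3).} Let $N$ be the number of points, so $\rho(\hat0)+\rho(\hat1)=N-1$.
\begin{itemize}
\item For $A$: its minimum is $\hat0$, of rank $0$, and its maximum is $\hat1$, of rank $N-1$. The sum is $N-1$.
\item For $B_k$: its minimum has rank $1$, and its maximum has two blocks, so rank $N-2$. The sum is again $N-1$.
\end{itemize}
Hence Remark~\ref{rem:scd} assembles the decompositions of the pieces into a symmetric chain decomposition of the whole lattice. The computation is identical for $V_{m,n}$, with $N=m+n+1$.

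\emph{Main obstacle.} The delicate step is confirming that each $B_k$ really is a single interval, with the stated two-block partition as its top element, in both the $U$ and $V$ cases. In particular, we must place $z$ correctly in the $V$ case and check that this two-block partition is noncrossing. The natural way to do this is to exhibit a separating line through the origin region between the two blocks, namely the segment from $x_{m-1}$ toward $y_k$ extended slightly.
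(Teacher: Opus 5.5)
Your proposal is correct and follows the paper's own route. You decompose via Lemma~\ref{lem:u-partition-recursive} (and its $V$ analogue) into pieces isomorphic to $\NC(U_{m-1,n})\times\bool(1)$ and $\NC(U_{m-1,k-1})\times\bool(n-k)$, use Boolean and $T$-type base cases, and assemble with Remark~\ref{rem:scd}.

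Your explicit check of the Remark's hypotheses goes beyond what the paper writes down: each $B_k$ is the interval from the atom $\{x_m,y_k\}$ to the two-block partition, and every rank sum equals $N-1$. The one step you assert rather than verify is that every element of $B_k$ lies below that two-block partition. It holds because a block meeting both $\{y_{k+1},\dots,y_n\}$ and the near side $\{x_1,\dots,x_{m-1},y_1,\dots,y_{k-1}\}$ (with $z$ in the $V$ case) would meet $\conv(\{x_m,y_k\})$. Also, neither $z$ nor any $x_i$ with $i<m-1$ can join the block of $x_m$ without $x_{m-1}$; this is the correct reason $z$ goes with the near side.
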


\begin{proof}
	First, observe that $\NC(U_{0,n})$ and $\NC(U_{m,0})$ 
	are Boolean lattices and therefore admit symmetric chain decompositions. We also have
	$\NC(U_{1,n}) \cong \NC(T_n)$, which has a symmetric chain 
	decomposition by Proposition~\ref{prop:t-scd}. 
	
	Proceeding by induction on both $m$ and $n$, suppose that $\NC(U_{a,b})$ has a symmetric
	chain decomposition when $a < m$ and $b\leq n$. 	When $m\geq 2$ and $n \geq 1$, 
	Lemma~\ref{lem:u-partition-recursive} tells us that $\NC(U_{m,n})$ is the disjoint union
	of subposets which are isomorphic to $\NC(U_{m-1,n}) \times \bool(1)$
	and $\NC(U_{m-1,k-1}) \times \bool(n-k)$ for $k \in \{1,\ldots,n\}$. Combining the
	inductive hypothesis with the fact that Boolean lattices have symmetric chain decompositions, 
	we know by Remark~\ref{rem:scd} that $\NC(U_{m,n})$ is the union of disjoint centered
	subposets with symmetric chain decompositions, and therefore $\NC(U_{m,n})$ itself
	admits a symmetric chain decomposition. The same result for $\NC(V_{m,n})$ 
	follows analogously.
\end{proof}

To close this section, we examine the sizes of $\NC(U_{m,n})$ and $\NC(V_{m,n})$. 
See Table~\ref{tab:uvmn} for values with small $m$ and $n$. Note that the size of
$\NC(U_{m,n})$ matches the OEIS sequence \href{https://oeis.org/A035002}{A035002}
and the size of $\NC(V_{m,n})$ matches the sequence 
\href{https://oeis.org/A341867}{A341867} \cite{oeis}.
More concretely, we now provide multivariate generating functions for the sizes 
of $\NC(U_{m,n})$ and $\NC(V_{m,n})$. 

\begin{table}
	\begin{tabular}{c|ccccc}
		\multicolumn{6}{l}{$|\NC(U_{m,n})|$} \\
		\hline \hline 
		$m \backslash n$ & 0 & 1 & 2 & 3 & 4 \\
		\hline
		0 & 0 & 1 & 2 & 4 & 8 \\
		1 & 1 & 2 & 5 & 12 & 28 \\
		2 & 2 & 5 & 14 & 37 & 94 \\
		3 & 4 & 12 & 37 & 106 & 289 \\
		4 & 8 & 28 & 94 & 289 & 838 \\
	\end{tabular}
	\hfill
	\begin{tabular}{c|ccccc}
		\multicolumn{6}{l}{$|\NC(V_{m,n})|$} \\
		\hline \hline 
		$m \backslash n$ & 0 & 1 & 2 & 3 & 4 \\
		\hline
		0 & 1 & 2 & 4 & 8 & 16 \\
		1 & 2 & 5 & 12 & 28 & 64 \\
		2 & 4 & 12 & 33 & 86 & 216 \\
		3 & 8 & 28 & 86 & 245 & 664 \\
		4 & 16 & 64 & 216 & 664 & 1921 \\
	\end{tabular}
	\vspace{1em}
	\caption{Sizes of $\NC(U_{m,n})$ and
	$\NC(V_{m,n})$ for small values of $m$ and $n$}
	\label{tab:uvmn}	
\end{table}

\begin{thm}\label{thm:u-gf}
	If $u_{m,n}$ is the size of $\NC(U_{m,n})$, then the associated multivariate generating
	function is
	\[
		U(x,y) = \sum_{\substack{m,n\geq 0}} u_{m,n}x^m y^n = \frac{x + y - 2xy}{1-2x-2y+3xy}.
	\]
\end{thm}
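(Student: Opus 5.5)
The plan is to turn Lemma~\ref{lem:u-partition-recursive} into a recurrence for $u_{m,n}$ and then into a functional equation for $U(x,y)$. Counting elements in the decomposition gives, for $m\geq 2$ and $n\geq 1$,
\[
	u_{m,n} = 2u_{m-1,n} + \sum_{k=1}^{n} u_{m-1,k-1}\,2^{\,n-k}.
\]
The boundary values are $u_{0,0}=0$, $u_{m,0}=2^{m-1}$ and $u_{0,n}=2^{n-1}$ for $m,n\geq 1$ (collinear configurations, Example~\ref{ex:boolean}), and $u_{1,n}=t_n$ from Proposition~\ref{prop:t-gen-func}. I will first check whether the recurrence also holds for $m=1$, $n\geq1$. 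The point is that when $m=1$, the set $A$ still makes sense (only the singleton case remains), and $B_k$ is then isomorphic to $\NC(U_{0,k-1})\times\bool(n-k)$. Concretely this reads $t_n = 2\cdot 2^{n-1} + \sum_{k} u_{0,k-1}2^{n-k}$. Since $u_{0,0}=0$, this needs to reduce to $t_n = 2^n + (n-1)2^{n-2}$, which equals $(n+3)2^{n-2}$. So it agrees with the closed form recorded after Proposition~\ref{prop:t-gen-func}, and I expect the recurrence to hold for all $m\geq1$, $n\geq1$. That removes the need for a separate $m=1$ boundary row.

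The inner sum is a convolution in $n$. Writing $F_{m-1}(y)=\sum_n u_{m-1,n}y^n$, the sum contributes $F_{m-1}(y)\cdot \frac{y}{1-2y}$ in the $y$-variable. Multiplying by $x^m y^n$ and summing over $m\geq1$, $n\geq1$ gives
\[
	U(x,y) - \sum_{n} u_{0,n}y^n - \sum_{m\geq1} u_{m,0}x^m
	= 2x\Big(U(x,y) - \sum_{m} u_{m,0}x^m\Big) + \frac{xy}{1-2y}\,U(x,y).
\]
The boundary series are $\frac{y}{1-2y}$ and $\frac{x}{1-2x}$. The constant $u_{0,0}=0$ needs careful bookkeeping, and the $n=0$ terms have to be excluded from the $2x$ term.

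Solving, I get
\[
	U\Big(1-2x-\tfrac{xy}{1-2y}\Big) = \tfrac{y}{1-2y} + \tfrac{x}{1-2x} - \tfrac{2x^2}{1-2x} = \tfrac{y}{1-2y} + x.
\]
Multiplying through by $1-2y$ turns the left side into $U(1-2x-2y+4xy-xy)=U(1-2x-2y+3xy)$. The right side becomes $y + x - 2xy$, which is exactly the claimed formula.

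The main obstacle is the index bookkeeping at the boundaries: confirming that the recurrence is valid at $m=1$, and correctly handling $u_{0,0}=0$ together with the $n=0$ column. As a sanity check I would verify a few table entries, for example $u_{2,2}=2\cdot5+(u_{1,0}\cdot2+u_{1,1})=10+4=14$.
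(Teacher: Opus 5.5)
Your proof is correct and follows the paper's route: you turn Lemma~\ref{lem:u-partition-recursive} into a recurrence and then solve a functional equation for $U$. The one difference is that you check against $t_n=(n+3)2^{n-2}$ that the recurrence also holds at $m=1$, so you can drop the separate $xT(y)$ row the paper carries, which shortens the algebra. Be aware that the $m=1$ case holds only as a numerical identity, not through the decomposition you sketch: for $m=1$ one has $|A|=u_{0,n}$ rather than $2u_{0,n}$, and $|B_1|=2^{n-1}$ rather than $u_{0,0}\,2^{n-1}=0$, and these two discrepancies cancel. So it is your closed-form check, not the combinatorial reading, that justifies that step.
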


\begin{proof}
	As a first step, we have 
	\begin{align*}
		U(x,y) &= \sum_{\substack{m\geq 0 \\ n\geq 0}} u_{m,n} x^m y^n \\
		&= \sum_{n\geq 0} u_{0,n} y^n + \sum_{n\geq 0} u_{1,n} xy^n
		+ \sum_{m\geq 2} u_{m,0} x^m + \sum_{\substack{m\geq 2 \\ n\geq 1}} u_{m,n} x^m y^n.
	\end{align*}
	The first three sums are straightforward to deal with. The first is 
	\[
		\sum_{n\geq 0} u_{0,n} y^n = \sum_{n\geq 1} 2^{n-1} y^n = \frac{y}{1-2y}
	\]
	by Example~\ref{ex:boolean}, the second is 
	\[
		\sum_{n\geq 0} u_{1,n} xy^n = x \sum_{n\geq 0} t_n y^n = x\frac{(1-y)^2}{(1-2y)^2}
	\]
	by Proposition~\ref{prop:t-gen-func}, and the third is
	\[
		\sum_{m\geq 2} u_{m,0} x^m = \sum_{m\geq 2} 2^{m-1} x^m = \frac{2x^2}{1-2x}
	\]
	by Example~\ref{ex:boolean} again. Combining these three sums, we obtain
	\[
		\sum_{n\geq 0} u_{0,n} y^n + \sum_{n\geq 0} u_{1,n} xy^n + \sum_{m\geq 2} u_{m,0} x^m 
		= \frac{y}{1-2y} + x\frac{(1-y)^2}{(1-2y)^2} + \frac{2x^2}{1-2x}.
	\]
	As for the fourth sum, we know by Lemma~\ref{lem:u-partition-recursive} that
	\[
		u_{m,n} = 2u_{m-1,n} + \sum_{k=1}^n u_{m-1,k-1} 2^{n-k}
	\]	
	when $m \geq 2$ and $n \geq 1$. Applying this recursive formula, we 
	can write the fourth sum as follows:
	\begin{align*}
		\sum_{\substack{m\geq 2 \\ n\geq 1}} u_{m,n} x^m y^n &= 
		2 \sum_{\substack{m\geq 2 \\ n\geq 1}} u_{m-1,n} x^m y^n + 
		\sum_{\substack{m\geq 2 \\ n\geq 1}} \sum_{k=1}^n u_{m-1,k-1} 2^{n-k} x^m y^n \\
		&= 2x \sum_{\substack{m\geq 1 \\ n \geq 1}} u_{m,n} x^m y^n + 
		xy\sum_{\substack{m\geq 1 \\ n\geq 1}} \sum_{k=0}^{n-1} u_{m,k} 2^{n-k-1} x^m y^{n-1} \\
		&= 2x \sum_{\substack{m\geq 1 \\ n \geq 1}} u_{m,n} x^m y^n + 
		xy\left(\sum_{\substack{m\geq 1 \\ n \geq 0}} u_{m,n} x^m y^n\right)
		\left(\sum_{n\geq 0} 2^{n} y^n\right) \\ 
		&= 2x\left(U(x,y) - \frac{x}{1-2x} - \frac{y}{1-2y}\right) + \left(U(x,y) 
		- \frac{y}{1-2y}\right)\frac{xy}{1-2y}\\
		&= U(x,y)\left(2x + \frac{xy}{1-2y}\right) - \frac{2x^2}{1-2x} - \frac{2xy}{1-2y}
		- \frac{x y^2}{(1-2y)^2}.
	\end{align*}
	Putting it all together, we observe that two terms immediately cancel and we have
	\begin{align*}
		U(x,y) &= U(x,y) \frac{2x-3xy}{1-2y} + \frac{y}{1-2y} + x\frac{(1-y)^2}{(1-2y)^2}
		- \frac{2xy}{1-2y} - \frac{x y^2}{(1-2y)^2},
	\end{align*}
	which can be rearranged to get
	\begin{align*}
		U(x,y)\left(1 - 	\frac{2x-3xy}{1-2y}\right) &= \frac{y-2xy}{1-2y} + \frac{x(1-y)^2 - xy^2}{(1-2y)^2} \\
		U(x,y)\frac{1-2x-2y+3xy}{1-2y} &= \frac{y(1-2x)}{1-2y} + \frac{x(1-2y)}{(1-2y)^2} \\
		U(x,y)(1-2x-2y+3xy) &= x + y -2xy \\
		U(x,y) &= \frac{x+y-2xy}{1-2x-2y+3xy}
	\end{align*}
	and we are done.
\end{proof}

\begin{thm}\label{thm:v-gf}
	If $v_{m,n}$ is the size of $\NC(V_{m,n})$, then the associated multivariate generating
	function is
	\[
		V(x,y) = \sum_{\substack{m,n\geq 0}} v_{m,n}x^m y^n = \frac{1}{1-2x-2y+3xy}.
	\]
\end{thm}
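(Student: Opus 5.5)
We will follow the strategy of Theorem~\ref{thm:u-gf}, but first arrange the recursion so that it covers almost all boundary cases at once. The idea is to set $x_0 = z$: in $V_{m,n}$ the points $z, x_1, \ldots, x_m$ lie in order on a common ray, so the origin plays the role of the point ``$x_{m-1}$'' when $m = 1$.

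The first step is to check that the decomposition of Lemma~\ref{lem:u-partition-recursive} holds for $\NC(V_{m,n})$ for every $m \geq 1$ and $n \geq 0$, with $x_{m-1}$ read as $z$ when $m=1$.
\begin{itemize}
\item In any noncrossing partition, $x_m$ is either a singleton, or in a block with $x_{m-1}$, or in a block whose nearest $y$-point is some $y_k$ and which avoids $x_{m-1}$ and $y_1,\ldots,y_{k-1}$. (Any block containing $x_m$ and $z$ or some $x_j$ with $j<m-1$ also contains $x_{m-1}$, since they are collinear.)
\item Lemma~\ref{lem:remove-point} applies with the cyclic order $\ldots, x_{m-1}, x_m, y_n, \ldots$. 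The points $x_m$ and $y_n$ are vertices of the convex hull, and so is $z$ when $m=1$. It gives $A \cong \NC(V_{m-1,n}) \times \bool(1)$.
\item For $B_k$, the block containing $\{x_m, y_k\}$ separates the configuration into two parts. One is $\{z, x_1,\ldots,x_{m-1}, y_1,\ldots,y_{k-1}\}$, which is a copy of $V_{m-1,k-1}$; when $m=1$ it is a line of $k$ points, which is $V_{0,k-1}$. The other is the collinear set $\{y_k,\ldots,y_n\}$. Hence $B_k \cong \NC(V_{m-1,k-1}) \times \bool(n-k)$.
\end{itemize}
Taking sizes gives
\[
v_{m,n} = 2v_{m-1,n} + \sum_{k=1}^{n} v_{m-1,k-1}\,2^{n-k} \qquad (m\geq 1,\ n\geq 0),
\]
where the sum is empty when $n=0$, recovering $v_{m,0}=2v_{m-1,0}$. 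The one remaining boundary row is $v_{0,n} = 2^n$, since $V_{0,n}$ is $n+1$ collinear points (Example~\ref{ex:boolean}). As sanity checks, this recursion gives $v_{1,n}=t_{n+1}$ and agrees with Table~\ref{tab:uvmn}, e.g.\ $v_{2,2} = 2\cdot 12 + (2\cdot 2 + 5) = 33$.

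The second step is the generating-function manipulation. Splitting $V(x,y)$ into the row $m=0$ and the part with $m\geq 1$, the convolution with $2^{n-k}$ becomes a product with $1/(1-2y)$, exactly as in the proof of Theorem~\ref{thm:u-gf}. This yields
\[
V(x,y) = \frac{1}{1-2y} + x\left(2 + \frac{y}{1-2y}\right)V(x,y).
\]
Multiplying through by $1-2y$ gives $V(x,y)\,(1-2y-2x+4xy-xy) = 1$, which is the claim.

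The main obstacle is the first step: justifying that the lemma's decomposition and the isomorphism for $B_k$ remain valid in the degenerate case $m=1$ (with $z$ as the neighbour of $x_1$) and for $n=0$. If that extension feels shaky, the fallback is to treat the row $m=1$ separately using $v_{1,n}=t_{n+1}$ and Proposition~\ref{prop:t-gen-func}, and then carry out the same algebra as in Theorem~\ref{thm:u-gf} with the boundary series $\frac{1}{1-2y}$, $\frac{x(T(y)-1)}{y}$ and $\frac{4x^2}{1-2x}$.
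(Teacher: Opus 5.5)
Your proof is correct, but it takes a different route from the paper's.

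The paper conditions on the corner point $z$. Either $z$ is a singleton (contributing $u_{m,n}$), or it shares a block with $x_1$, with $y_1$, or with both. Inclusion--exclusion then gives $v_{m,n}=u_{m,n}+v_{m-1,n}+v_{m,n-1}-v_{m-1,n-1}$ for $m,n\geq 1$. Feeding in the already computed $U(x,y)$ yields $(1-x)(1-y)V(x,y)=1+U(x,y)$, and the formula follows.

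You instead run the $x_m$-removal decomposition of Lemma~\ref{lem:u-partition-recursive} directly on $V_{m,n}$, with $z$ playing the role of $x_0$. A single recursion then holds for all $m\geq 1$, $n\geq 0$, and only the row $m=0$ is boundary data. Your handling of the degenerate case $m=1$ is right: the near side $\{z,y_1,\ldots,y_{k-1}\}$ is a copy of $V_{0,k-1}$, and $z$ is excluded from $x_1$'s block in $B_k$ by definition. For $n=0$ the configuration is collinear, so Lemma~\ref{lem:remove-point} does not literally apply (there is no $y_n$). The recursion there is simply $2^m=2\cdot 2^{m-1}$, which you already note.

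What your route buys is self-containment and transparency. It does not rely on Theorem~\ref{thm:u-gf} or Proposition~\ref{prop:t-gen-func}. It also shows that $1-2x-2y+3xy$ is just the kernel of the recursion, and the numerator is $1$ because the only boundary series is $1/(1-2y)$. The open cone has the same kernel, but its recursion only starts at $m=2$, which is where the messier numerator of $U$ comes from. The paper's route avoids re-checking the decomposition in degenerate cases, and it produces the identity $(1-x)(1-y)V=1+U$ relating the closed and open cone lattices directly.
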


\begin{proof}
	First, we have
	\begin{align}
		V(x,y) &= \sum_{\substack{m\geq 0 \\ n\geq 0}} v_{m,n} x^m y^n \nonumber \\
		&= v_{0,0} + \sum_{n\geq 1} v_{0,n} y^n + \sum_{m \geq 1} v_{m,0} x^m + 
		\sum_{\substack{m\geq 1 \\ n\geq 1}} v_{m,n} x^m y^n \nonumber \\
		&= 1 + \frac{2y}{1-2y} + \frac{2x}{1-2x} + 
		\sum_{\substack{m\geq 1 \\ n\geq 1}} v_{m,n} x^m y^n. \label{eq:v-gf}
	\end{align}
	Recalling that $V_{m,n}$ is obtained from $U_{m,n}$ by including the additional
	corner point $z$, we observe that $z$ is either a singleton or it shares a block with 
	$x_1$, $y_1$, or both. When $m$ and $n$ are at least $1$, we thus have the
	recursive enumeration formula:
	\[
		v_{m,n} = u_{m,n} + v_{m-1,n} + v_{m,n-1} - v_{m-1,n-1}.
	\]
	The first term on the righthand side corresponds to the case where $z$ is a singleton. 
	The second and third terms correspond to the case where $z$ shares a block with $x_1$
	or $y_1$ respectively, and we subtract the fourth term to account for the overlap, in which
	$z$ shares a block with both $x_1$ and $y_1$.

	Returning to the generating function, we can apply the recursive enumeration formula 
	to write the remaining sum as
	\[
		\sum_{\substack{m\geq 1 \\ n\geq 1}} u_{m,n} x^m y^n
		+ \sum_{\substack{m\geq 1 \\ n\geq 1}} v_{m-1,n} x^m y^n 
		+ \sum_{\substack{m\geq 1 \\ n\geq 1}} v_{m,n-1} x^m y^n
		- \sum_{\substack{m\geq 1 \\ n\geq 1}} v_{m-1,n-1} x^m y^n,
	\]
	which can be reindexed to provide
	\[		
		\sum_{\substack{m\geq 1 \\ n\geq 1}} u_{m,n} x^m y^n
		+ x\sum_{\substack{m\geq 0 \\ n\geq 1}} v_{m,n} x^m y^n 
		+ y\sum_{\substack{m\geq 1 \\ n\geq 0}} v_{m,n} x^m y^n
		- xy\sum_{\substack{m\geq 0 \\ n\geq 0}} v_{m,n} x^m y^n.
	\]
	We handle each of the four sums in turn:
	\begin{itemize}
		\item $\displaystyle\sum_{\substack{m\geq 1 \\ n\geq 1}} u_{m,n} x^m y^n = 
		U(x,y) - \frac{x}{1-2x} - \frac{y}{1-2y}$;
		\item $\displaystyle x\sum_{\substack{m\geq 0 \\ n\geq 1}} v_{m,n} x^m y^n = 
		xV(x,y) - \frac{x}{1-2x};$
		\item $\displaystyle y\sum_{\substack{m\geq 1 \\ n\geq 0}} v_{m,n} x^m y^n = 
		yV(x,y) - \frac{y}{1-2y};$
		\item $\displaystyle xy\sum_{\substack{m\geq 0 \\ n\geq 0}} v_{m,n} x^m y^n = 
		xy V(x,y).$
	\end{itemize}
	Returning to Equation~\ref{eq:v-gf}, we now have
	\begin{align*}
		V(x,y) &= 1 + U(x,y) + xV(x,y) + yV(x,y) - xyV(x,y) \\
		V(x,y)(1-x-y+xy) &= 1 + \frac{x+y-2xy}{1-2x-2y+3xy}	\\
		V(x,y)(1-x-y+xy) &= \frac{1-x-y+xy}{1-2x-2y+3x} \\
		V(x,y) &= \frac{1}{1-2x-2y+3xy}
	\end{align*}
	as desired.
\end{proof}

\section{Semicircular configurations}\label{sec:semicirc-config}

In our final section, we will examine the noncrossing partition lattices for
semicircular configurations, introduced in Definition~\ref{def:three-configurations}.
In particular, we show that these lattices admit symmetric chain decompositions
and produce the associated multivariate generating functions. For the rest 
of this section, let $S_{m,n}$ denote a configuration of $m+2$ points $x_0,\ldots,x_{m+1}$
on the line segment from $-1$ to $1$ in $\mathbb{C}$ and $n$ points $y_1,\ldots,y_n$ 
on the upper half of the unit circle.

As in the previous section, we begin with a lemma on the recursive structure for $\NC(S_{m,n})$.
Note the similarities with Lemma~\ref{lem:u-partition-recursive}.

\begin{lem}\label{lem:s-partition-recursive}
	Let $m\geq 1$ and $n\geq 1$. Define $A$ to be the subposet of $\NC(S_{m,n})$ which consists
	of all partitions in which $x_{m+1}$ is either a singleton or in a block with $x_m$.
	For each $k \in \{1,\ldots,n\}$, define $B_k$ to be the subposet of $\NC(S_{m,n})$ consisting
	of all partitions in which $x_{m+1}$ shares a block with $y_k$ but not $x_m,y_1,\ldots,y_{k-1}$.
	Then
	\begin{enumerate}
		\item $A$ is isomorphic to $\NC(S_{m-1,n}) \times \bool(1)$;
		\item $B_k$ is isomorphic to $\NC(S_{m-1,k-1}) \times \NC(n-k+1)$.
	\end{enumerate}
	Moreover, $\NC(S_{m,n})$ is the disjoint union of $A,B_1,\ldots,B_k$.
\end{lem}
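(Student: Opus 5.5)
I would model the argument on the proof of Lemma~\ref{lem:u-partition-recursive}, treating the three parts separately. Throughout, the points of $S_{m,n}$ are listed in counterclockwise order as $x_0,x_1,\ldots,x_{m+1},y_1,\ldots,y_n$. The corners $x_0=-1$ and $x_{m+1}=1$ are vertices of the convex hull, and so is every $y_k$.

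For the decomposition, I would first show that $x_{m+1}$ must be in exactly one of four situations: it is a singleton, it shares a block with $x_m$, or its block contains some $y$'s but not $x_m$. In the last situation, let $y_k$ be the lowest-indexed $y$ in its block; that partition lies in $B_k$. One must also rule out $x_{m+1}$ sharing a block with some $x_j$, $j<m$, without also containing $x_m$. This holds because $x_m$ lies on the segment $\conv\{x_j,x_{m+1}\}$, so the block of $x_m$ would cross. Disjointness of $A, B_1,\ldots,B_n$ is then immediate from the definitions. (The statement's ``$B_k$'' in the final sentence should read $B_n$.)

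For part (1), I would apply Lemma~\ref{lem:remove-point} with $p_n=x_{m+1}$ and $p_{n-1}=x_m$, cyclically relabelling so that $p_1=y_1$. Since $x_{m+1}$ and $y_1$ are vertices, the hypotheses hold when $n\ge1$. Part (2) of that lemma gives $A\cong \NC(S_{m,n}-\{x_{m+1}\})\times\bool(1)$. Removing $x_{m+1}$ leaves $m+1$ collinear points $x_0,\ldots,x_m$ and the $n$ arc points. Here I need to check that $\NC$ of this configuration is isomorphic to $\NC(S_{m-1,n})$. The new right endpoint $x_m$ is not on the circle, but I expect that only the cyclic order and collinearity pattern on the boundary of the convex hull matter. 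Two blocks cross exactly when their cyclic orders interleave, or when two collinear segments overlap, and both criteria are unchanged by sliding $x_m$ to $1$. This combinatorial invariance is the main point I would need to argue carefully, probably by citing the remark after Definition~\ref{def:three-configurations} that the isomorphism type depends only on $m,n$.

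For part (2), fix $k$. In any $\pi\in B_k$, the block $R$ containing $x_{m+1}$ and $y_k$ has a chord from $x_{m+1}$ to $y_k$. This chord separates the configuration into two sides. The first side is $\{x_0,\ldots,x_m,y_1,\ldots,y_{k-1}\}$; it contains no point of $R$, by the choice of $k$ and since $x_m\notin R$. The second side is $\{y_k,\ldots,y_n\}$ together with $x_{m+1}$, and these points lie in convex position on the arc. No block can meet both sides without crossing $R$, so $\pi$ restricts to a pair of partitions. On the first side, the points $x_0,\ldots,x_m,y_1,\ldots,y_{k-1}$ form $m+1$ points on the diameter and $k-1$ on the arc, which is a copy of $S_{m-1,k-1}$ up to the same invariance as in part (1). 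On the second side, the $n-k+2$ points $x_{m+1},y_k,\ldots,y_n$ are in convex position with $x_{m+1},y_k$ forced together. By Lemma~\ref{lem:remove-point}(1), the second side is $\NC(n-k+1)$. Conversely, any such pair recombines into a noncrossing partition, because the chord from $x_{m+1}$ to $y_k$ separates the two hulls. The refinement order is preserved in both directions, giving $B_k\cong\NC(S_{m-1,k-1})\times\NC(n-k+1)$.
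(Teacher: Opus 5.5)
Your argument follows the paper's: classify by what happens to $x_{m+1}$, get $A$ from Lemma~\ref{lem:remove-point}(2), and get $B_k$ by cutting along the chord $x_{m+1}y_k$. You are more careful than the paper on two points. The paper cites Lemma~\ref{lem:remove-point} for $B_k$, but that lemma only removes one point, so your chord argument is what actually produces the product. The paper also takes for granted that $\NC(S_{m,n}-\{x_{m+1}\})\cong\NC(S_{m-1,n})$, even though $x_m$ is no longer a corner of a semicircle.

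There is one orientation slip you need to fix. With $x_0=-1$ and $x_{m+1}=1$, your declared counterclockwise order $x_0,\ldots,x_{m+1},y_1,\ldots,y_n$ makes $y_1$ the arc point nearest $x_{m+1}$. Under that labeling the chord $x_{m+1}y_k$ cuts off $y_1,\ldots,y_{k-1}$ by themselves, and the other side is $\{x_0,\ldots,x_m,y_{k+1},\ldots,y_n\}$. That is not the pair of sides you use in part (2). The claimed isomorphism for $B_k$ is then actually false: for $m=1$, $n=2$, $k=1$ one gets $|B_1|=7$ rather than $|\NC(S_{0,0})|\cdot|\NC(2)|=4$.

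The statement of the lemma forces $y_1$ to be adjacent to $x_0$, i.e.\ counterclockwise order $x_0,\ldots,x_{m+1},y_n,\ldots,y_1$. With that convention your part (2) is correct as written. In part (1) the counterclockwise successor of $x_{m+1}$ becomes $y_n$, so take $p_1=y_n$ instead of $y_1$; it is still a vertex, so Lemma~\ref{lem:remove-point} applies unchanged. The decomposition and disjointness argument does not depend on the convention.
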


\begin{proof}
	The first claim follows directly from Lemma~\ref{lem:remove-point}. For the second,
	we know by Lemma~\ref{lem:remove-point} that $B_k$ is isomorphic to 
	$\NC(S_{m-1,k-1}) \times \NC(\{y_k,\ldots,y_n\})$,
	and since the points  $y_k,\ldots,y_n$ are assumed to lie on the boundary of a circle, we know that
	$\NC(\{y_k,\ldots,y_n\})$ is isomorphic to the classical lattice of noncrossing partitions
	$\NC(n-k+1)$. Finally, we know that in each noncrossing partition of $S_{m,n}$, the point
	$x_{m+1}$ must either be a singleton or share a block with $x_m$ or one of $y_1,\ldots,y_n$,
	so $\NC(S_{m,n})$ is the union of $A,B_1,\ldots,B_n$ and it is clear from the definition
	that these subposets are disjoint.	
\end{proof}

Our first application of Lemma~\ref{lem:s-partition-recursive} is to show that $\NC(S_{m,n})$
has a symmetric chain decomposition (and is therefore rank-symmetric).

\begin{thm}\label{thm:s-scd}
	$\NC(S_{m,n})$ admits a symmetric chain decomposition.	
\end{thm}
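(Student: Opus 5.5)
We argue by induction on $m$, in parallel with the proof of Theorem~\ref{thm:uv-scd}, using Lemma~\ref{lem:s-partition-recursive} and Remark~\ref{rem:scd}. Theorem~\ref{thm:graded} shows that $\NC(S_{m,n})$ is graded, since $S_{m,n}$ lies on the boundary of a convex region; replacing the arc by a polygon through the $y_i$ preserves convex position and all hulls.

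\textbf{Base cases.} When $n=0$, the points $x_0,\ldots,x_{m+1}$ are collinear, so $\NC(S_{m,0})\cong\bool(m+1)$, which has a symmetric chain decomposition \cite{de-bruijn}. When $m=0$, the configuration $S_{0,n}$ consists of $n+2$ points in convex position, since the corners $\pm 1$ are vertices and the $y_i$ lie on the arc. Hence $\NC(S_{0,n})\cong\NC(n+2)$, which has a symmetric chain decomposition by \cite{simion-ullman91}.

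\textbf{Inductive step.} Let $m,n\geq 1$ and assume $\NC(S_{a,b})$ admits a symmetric chain decomposition for all $a<m$ and all $b$. By Lemma~\ref{lem:s-partition-recursive}, $\NC(S_{m,n})$ is the disjoint union of
\[
A\cong \NC(S_{m-1,n})\times\bool(1), \qquad B_k\cong \NC(S_{m-1,k-1})\times \NC(n-k+1) \quad (1\leq k\leq n).
\]
Each factor admits a symmetric chain decomposition, by the inductive hypothesis, \cite{de-bruijn}, or \cite{simion-ullman91}. Products of such posets do as well \cite{de-bruijn}, so condition (2) of Remark~\ref{rem:scd} holds.

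For condition (1), note that $A$ is the union of the intervals $[\alpha,\hat{1}]$ and $[\hat{0},\beta]$, exactly as in Lemma~\ref{lem:remove-point}. Each $B_k$ is an interval $[\hat{0}_k,\hat{1}_k]$:
\begin{itemize}
\item $\hat{0}_k$ consists of the block $\{x_{m+1},y_k\}$ together with singletons;
\item $\hat{1}_k$ consists of the blocks $\{x_0,\ldots,x_m,y_1,\ldots,y_{k-1}\}$ and $\{x_{m+1},y_k,\ldots,y_n\}$.
\end{itemize}
Any noncrossing partition between these has $x_{m+1}$ with $y_k$ and not with $x_m$ or any earlier $y_j$. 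Conversely, every element of $B_k$ lies in this interval, since the chord from $x_{m+1}$ to $y_k$ separates $\{x_0,\ldots,x_m,y_1,\ldots,y_{k-1}\}$ from $\{y_{k+1},\ldots,y_n\}$.

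\textbf{Main check: the rank condition.} It remains to verify condition (3). The total number of points is $N=m+n+2$, so $\rho(\hat{0})+\rho(\hat{1})=N-1$.
\begin{itemize}
\item For $A$, the minimum is $\hat{0}$ and the maximum $\alpha\vee\beta$-type element is $\{\{x_0,\ldots,x_m,y_1,\ldots,y_n\},\{x_{m+1}\}\}$ of rank $N-2$. This is off by one, so we treat $A$ as a product whose bottom is $\hat{0}$ (rank $0$) and top is $\hat{1}$ (rank $N-1$). This works because $A$ contains both $\hat{0}$ and $\hat{1}$ of $\NC(S_{m,n})$, since $x_{m+1}$ is a singleton in one and joined with $x_m$ in the other. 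So $A$ has minimum $\hat{0}$ and maximum $\hat{1}$, and the sum is $N-1$.
\item For $B_k$, we have $\rho(\hat{0}_k)=1$ and $\rho(\hat{1}_k)=N-2$, so the sum is again $N-1$.
\end{itemize}
One must also confirm that the isomorphism $B_k\cong$ product shifts ranks uniformly by $1$, so that centered chains in the product map to centered chains in $\NC(S_{m,n})$. This holds because the rank in $\NC(S_{m,n})$ of a partition in $B_k$ is $N$ minus its number of blocks, which equals $1$ plus the sum of the ranks in the two factors.

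The main obstacle is precisely this bookkeeping of ranks across the isomorphisms. Once it is in place, Remark~\ref{rem:scd} assembles the symmetric chain decompositions of $A,B_1,\ldots,B_n$ into one for $\NC(S_{m,n})$.
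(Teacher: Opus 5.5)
Your proposal is correct and follows essentially the same route as the paper: induction on $m$ using the decomposition of Lemma~\ref{lem:s-partition-recursive} into $A$ and $B_1,\ldots,B_n$, assembled via Remark~\ref{rem:scd}, except that you explicitly verify the interval and rank conditions the paper leaves implicit (and correctly write $\NC(S_{0,n})\cong\NC(n+2)$ where the paper has a typo). The only slip is your first sentence about $A$, which names $\{\{x_0,\ldots,x_m,y_1,\ldots,y_n\},\{x_{m+1}\}\}$ as its maximum; that element is only the top of $[\hat{0},\beta]$, and since $A\supseteq[\alpha,\hat{1}]$ its maximum is $\hat{1}$, as you then correctly observe.
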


\begin{proof}
	As base cases, observe that $\NC(S_{0,n})$ is the classical lattice of noncrossing partitions
	$\NC(n)$ and $\NC(S_{m,0})$ is the Boolean lattice $\bool(m+1)$, each of which is known
	to admit a symmetric chain decomposition. 
	Proceeding by induction on both $m$ and $n$, suppose that $\NC(S_{a,b})$ has a symmetric
	chain decomposition when $a < m$ and $b\leq n$. 	When $m\geq 1$ and $n \geq 1$, 
	Lemma~\ref{lem:s-partition-recursive} tells us that $\NC(S_{m,n})$ is the disjoint union of
	centered subposets which are isomorphic to $\NC(S_{m-1,n}) \times \bool(1)$ and
	$\NC(S_{m-1,k-1}) \times \NC(n-k+1)$. Using the inductive hypothesis and the facts that 
	Boolean lattices and classical lattices of noncrossing partitions each have 
	symmetric chain decompositions, we know by Remark~\ref{rem:scd} that
	$\NC(S_{m,n})$ has a symmetric chain decomposition as well.
\end{proof}

\begin{table}
	\centering
	\begin{tabular}{c|ccccc}
		\multicolumn{6}{l}{$|\NC(S_{m,n})|$} \\
		\hline \hline 
		$m \backslash n$ & 0 & 1 & 2 & 3 & 4 \\
		\hline
		0 & 2 & 5 & 14 & 42 & 132 \\
		1 & 4 & 12 & 37 & 118 & 387 \\
		2 & 8 & 28 & 94 & 317 & 1082 \\
		3 & 16 & 64 & 232 & 824 & 2921 \\
		4 & 32 & 144 & 560 & 2088 & 7674 \\
	\end{tabular}
	\vspace{1em}
	\caption{Sizes of $\NC(S_{m,n})$ for small values of $m$ and $n$}
	\label{tab:smn}
\end{table}

In Table~\ref{tab:smn}, we provide the size of $\NC(S_{m,n})$ for small values of $m$
and $n$. We are not aware of any previous appearances of these numbers.
Finally, we establish a multivariate generating function which enumerates the noncrossing 
partitions of $S_{m,n}$. As in the previous section, it would be interesting to find a closed
enumeration formula.

\begin{thm}\label{thm:s-gf}
	If $s_{m,n}$ is the size of $\NC(S_{m,n})$, then the associated multivariate generating
	function is
	\[
		S(x,y) = \sum_{\substack{m,n\geq 0}} s_{m,n}x^m y^n = \left(\frac{C(y)-1-y}{y^2}\right)
		\left(\frac{1}{1-x(1+C(y))}\right),
	\]
	where $C(y) = (1-\sqrt{1-4y})/2y$ is the generating function for the Catalan numbers.
\end{thm}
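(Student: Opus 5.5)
Lemma~\ref{lem:s-partition-recursive} gives, for $m\geq 1$ and $n\geq 1$, the recursion
\[
	s_{m,n} = 2s_{m-1,n} + \sum_{k=1}^{n} s_{m-1,k-1}\, C_{n-k+1}.
\]
I will turn this into a functional equation for $S(x,y)$ in $x$, with coefficients that are power series in $y$, and solve it. It is convenient to check that the recursion also holds when $n=0$ if the empty sum is read as $0$: then $s_{m,0}=2s_{m-1,0}$, which agrees with $s_{m,0}=|\bool(m+1)|=2^{m+1}$. Hence the recursion is valid for all $m\geq 1$ and $n\geq 0$, and the only boundary data needed is the row $m=0$, namely $s_{0,n}=|\NC(n+2)|=C_{n+2}$.

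Next I encode everything in single-variable series. Set $S_m(y)=\sum_{n\geq 0}s_{m,n}y^n$. The convolution $\sum_{k=1}^n s_{m-1,k-1}C_{n-k+1}$ is the coefficient of $y^{n}$ in $y\,S_{m-1}(y)\sum_{j\geq1}C_j y^{j-1}$, that is, in $S_{m-1}(y)\,(C(y)-1)$. The recursion therefore becomes
\[
	S_m(y) = S_{m-1}(y)\bigl(2 + C(y) - 1\bigr) = S_{m-1}(y)\bigl(1+C(y)\bigr).
\]
I will check the index shift at $n=0$, where both sides give $2s_{m-1,0}$ because $C(y)-1$ has no constant term.

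Iterating gives $S_m(y)=S_0(y)(1+C(y))^m$. Summing over $m$ yields $S(x,y)=S_0(y)/(1-x(1+C(y)))$, and finally
\[
	S_0(y)=\sum_{n\geq0}C_{n+2}y^n=\frac{C(y)-1-y}{y^2},
\]
which matches the claimed formula.

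The main obstacle is the careful bookkeeping of indices: verifying that the convolution exactly equals the coefficient extraction from $(C(y)-1)S_{m-1}(y)$, and that the edge cases $n=0$ and $m=0$ are handled consistently. I would confirm this against Table~\ref{tab:smn}. For example, $s_{1,1}=2\cdot 5+2\cdot 1=12$, and $(1+C)S_0$ has $y$-coefficient $2\cdot5+1\cdot2=12$.
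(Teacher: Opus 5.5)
Your proof is correct and follows essentially the same route as the paper: both feed the recursion of Lemma~\ref{lem:s-partition-recursive}, rewritten so that the Catalan convolution contributes $2+(C(y)-1)=1+C(y)$, into a functional equation with boundary row $S_0(y)=(C(y)-1-y)/y^2$. Your row-by-row organization is a tidy streamlining of the paper's bivariate bookkeeping. By extending the recursion to $n=0$ you get $S_m(y)=S_0(y)(1+C(y))^m$ directly, whereas the paper subtracts the $m=0$ and $n=0$ boundary sums explicitly.
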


\begin{proof}
	To start, we write
	\begin{align}
		S(x,y) &= \sum_{\substack{m\geq 0 \\ n\geq 0}} s_{m,n} x^m y^n \nonumber \\
		&= s_{0,0} + \sum_{m\geq 1} s_{m,0} x^m + \sum_{n\geq 1} s_{0,n} y^n +	
		\sum_{\substack{m\geq 1 \\ n\geq 1}} s_{m,n} x^m y^n  \nonumber \\
		&= 2 + \sum_{m\geq 1} 2^{m+1} x^m + \sum_{n\geq 1} C_{n+2}y^n + 
		\sum_{\substack{m\geq 1 \\ n\geq 1}} s_{m,n} x^m y^n  \nonumber \\
		&= 2 + 4x \sum_{m\geq 0}2^m x^m + \sum_{n\geq 3} C_{n} y^{n-2} + 
		\sum_{\substack{m\geq 1 \\ n\geq 1}} s_{m,n} x^m y^n  \nonumber \\
		&= 2 + \frac{4x}{1-2x} + \frac{C(y) - 1 - y - 2y^2}{y^2} + 
		\sum_{\substack{m\geq 1 \\ n\geq 1}} s_{m,n} x^m y^n \nonumber \\
		&= \frac{4x}{1-2x} + \frac{C(y) - 1 - y}{y^2} + 
		\sum_{\substack{m\geq 1 \\ n\geq 1}} s_{m,n} x^m y^n.\label{eq:s-gf}
	\end{align}
	By Lemma~\ref{lem:s-partition-recursive}, we have the recursive formula
	\[
		s_{m,n} = 2s_{m-1,n} + \sum_{k=1}^n s_{m-1,k-1} C_{n-k+1}
	\]
	for $m,n\geq 1$, which can be rewritten as
	\[
		s_{m,n} = s_{m-1,n} + \sum_{k=0}^n s_{m-1,k} C_{n-k}.
	\]
	We can then use this formula to write
	\begin{align*}
		\sum_{\substack{m\geq 1 \\ n\geq 1}} s_{m,n} x^m y^n &= 
		\sum_{\substack{m\geq 1 \\ n\geq 1}} s_{m-1,n} x^m y^n + 
		\sum_{\substack{m\geq 1 \\ n\geq 1}} \sum_{k=0}^n s_{m-1,k}C_{n-k} x^m y^n \\
		&= x\sum_{\substack{m\geq 0 \\ n\geq 1}} s_{m,n} x^m y^n + 
		x\sum_{\substack{m\geq 0 \\ n \geq 1}} \sum_{k=0}^n s_{m,k} C_{n-k} x^m y^n.
	\end{align*}
	The first term on the righthand side is 
	\begin{align*}
		x\sum_{\substack{m\geq 0 \\ n\geq 1}} s_{m,n} x^m y^n &= 
		x\left( S(x,y) - \sum_{m\geq 0} s_{m,0} x^m \right) \\
		&= x\left( S(x,y) - \frac{2}{1-2x}\right)
	\end{align*}
	and the second term is
	\begin{align*}
		x\sum_{\substack{m\geq 0 \\ n \geq 1}} \sum_{k=0}^n s_{m,k} C_{n-k} x^m y^n &= 
		x\left(\left(\sum_{\substack{m\geq 0 \\ n \geq 0}} s_{m,n} x^m y^n \right)
		\left(\sum_{n\geq 0} C_n y^n \right) - \sum_{m\geq 0} s_{m,0}x^m\right) \\
		&= x\left(S(x,y)C(y) - \frac{2}{1-2x}\right).
	\end{align*}
	Returning to Equation~\ref{eq:s-gf}, we can plug in and cancel terms to obtain 
	\begin{align*}
		S(x,y) &= \frac{C(y) - 1 - y}{y^2} + x S(x,y) + xS(x,y)C(y) \\
		S(x,y)(1-x(1+C(y)) &= \frac{C(y) - 1 - y}{y^2} \\
		S(x,y) &= \left(\frac{C(y)-1-y}{y^2}\right)
		\left(\frac{1}{1-x(1+C(y))}\right),
	\end{align*}
	which completes the proof.
\end{proof}

\bibliographystyle{amsalpha}
\bibliography{nc-cones-semicircles}

\end{document}